\newtheorem{thm}{Theorem}[section]
\newtheorem{lem}[thm]{Lemma}
\newtheorem{conj}[thm]{Conjecture}
\newtheorem{rmk}[thm]{Remark}
\newtheorem{dummy}{Theorem}
\newtheorem{dummmy}{Conjecture}
\newtheorem{dummmmy}{Lemma}
\def\Z{\mathbb Z}
\def\R{\mathbb R}
\newcommand{\nbf}[1]{\noindent \textbf{#1}}
\begin{document}
\title[A construction of slice knots via annulus twists]
{A construction of slice knots\\ via annulus twists}
\author{Tetsuya Abe and Motoo Tange}
\subjclass[2010]{57M25, 57R65}
\keywords{Annulus twist, fishtail neighborhood, homotopy 4-ball,  
log transformation, slice-ribbon conjecture}
\address{Department of Mathematics,
Tokyo Institute of Technology,
2-12-1 Ookayama, Meguro-ku, 
Tokyo 152-8551, Japan}
\email{abe.t.av@m.titech.ac.jp}
\address{Institute of Mathematics, 
University of Tsukuba, Ibaraki 305-8571, Japan}
\email{tange@math.tsukuba.ac.jp}
\date{\today}
\maketitle
\begin{abstract}
We give a new construction of slice knots via annulus twists.
The simplest slice knots obtained by our method 
are those  constructed by Omae.
In this paper,
we introduce a sufficient condition for given  slice knots to be ribbon,
and prove that  all Omae's knots are  ribbon.
\end{abstract}
\section{Introduction}
The annulus twist is a certain operation on knots 
along an  annulus embedded in the 3-sphere $S^3$.
Osoinach  \cite{Os} found that this operation is useful in the study of 3-manifolds.
Using annulus twists, he gave 
the first example of a 3-manifold 
admitting infinitely many presentations by $0$-framed knots.
For more studies, 
see  \cite{{AJOT}, {AJLO}, {BGL}, {K}, {Tak}, {Te},  {Om}}.

Recently, the first author, Jong, Omae and Takeuchi \cite{AJOT}
constructed  knots related to the slice-ribbon conjecture:
Let $K$ be a slice knot admitting an annulus presentation (for the definition, see Section 2)
and  $K_n$  $(n \in \Z)$ the knot obtained from $K$ by the $n$-fold annulus twist.
They proved that $K_n$
bounds a smoothly embedded disk in a certain homotopy 4-ball $W(K_n)$ with $\partial W(K_n) \approx S^3$.
A natural question is the following:\\
\vskip -2mm

\noindent \textbf{Question.}
Is $W(K_n)$ diffeomorphic to the standard 4-ball $B^4$?\\
\vskip -2mm

If  $W(K_n)$ is not diffeomorphic to $B^4$,
then the homotopy 4-sphere obtained 
by capping it off 
is a counterexample of the smooth 4-dimensional Poincar\'e conjecture.
For related studies,
see \cite{{A1}, {A2}, {FGMW}, {G1}, {G2}, {N}, {NS}, {Tan}}.
Our first result is the following:

\renewcommand{\thedummy}{\ref{thm:main1}}
\begin{dummy}
Let $K$ be a ribbon  knot admitting an annulus presentation and
 $K_n$ $(n \in \Z)$ the knot obtained from $K$ by the $n$-fold annulus twist.
 Then   the homotopy 4-ball  $W(K_n)$  associated to  $K_n$ is diffeomorphic to $B^4$, that is,
\[   W(K_n) \approx  B^4.\]
In particular, 
$K_n$ is a slice knot.
\end{dummy}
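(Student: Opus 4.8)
\emph{Sketch of the intended proof.}
The plan is to first recall, from \cite{AJOT}, a model for $W(K_n)$, then to identify the passage from $W(K_0)$ to $W(K_n)$ with a log transformation supported in a fishtail neighborhood, and finally to verify by Kirby calculus that this log transformation is trivial. In detail: since $K$ is slice we may fix a slice disk $D\subset B^4$ and put $X=B^4\setminus\nu(D)$, with $\partial X=S^3_0(K)$. An annulus twist preserves the $0$-surgery (see \cite{Os, AJOT}), so the $n$-fold annulus twist induces a diffeomorphism $S^3_0(K)\cong S^3_0(K_n)$ carrying the meridian $\mu_{K_n}$ to a curve $\gamma_n\subset\partial X$ representing a generator of $H_1(\partial X)\cong\Z$, and $W(K_n)$ is $X$ with a $0$-framed $2$-handle attached along $\gamma_n$. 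For $n=0$ the twist is the identity, $\gamma_0$ is the meridian of $K=K_0$, and attaching the $2$-handle simply glues $\nu(D)$ back in, so $W(K_0)\approx B^4$; a slice disk for $K_n$ is then, as in \cite{AJOT}, the cocore of the $2$-handle. So it suffices to prove that $X\cup_{\gamma_n}(2\text{-handle})$ is, up to diffeomorphism, independent of $n$.

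For this I would recognize the change from $\gamma_0$ to $\gamma_n$ as a \emph{log transformation}. The annulus twist is supported in a regular neighborhood $\nu(A)$ of the annulus $A$ of the annulus presentation, and I expect that inside $W(K_n)$ the trace of $\nu(A)$, together with the clasped part of the reattached $2$-handle threading it, forms a \emph{fishtail neighborhood} $N$: a regular neighborhood of an immersed sphere with one transverse double point, equivalently $T^2\times D^2$ with a $\mp1$-framed $2$-handle attached along a vanishing cycle $v\subset T^2\times\{*\}$. In these coordinates the $n$-fold annulus twist should become exactly the multiplicity-$n$ log transformation along the torus fibre $T=T^2\times\{*\}$ in the direction of $v$ --- this is the $4$-dimensional translation of ``insert $n$ full twists,'' made possible by $\partial A$ being a pair of parallel curves on $A$. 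The technical core is then a purely local claim: a log transformation of any multiplicity along such a torus, in the vanishing-cycle direction, produces a $4$-manifold diffeomorphic to the original by a diffeomorphism supported in $N$. Given this, $W(K_n)\approx W(K_0)\approx B^4$, and $K_n$ is slice.

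I would prove the local claim by explicit handle calculus. Draw $N$ after the twist as a dotted circle together with a $0$-framed meridian (the $T^2\times D^2$ part), the fishtail $2$-handle $h$, and an attaching arc carrying $n$ full twists; then slide $h$ over the torus fibre along the vanishing disk of $v$, which unclasps the twisting region and trades the $n$ full twists for an $n$-chain of $\pm1$-framed meridional unknots; finally blow these down one at a time, leaving $N$ without twists. Transplanting this sequence of moves into the global diagram of $W(K_n)$ collapses it to the diagram of $W(K_0)=B^4$, after which the $1$-handle cancels the remaining $2$-handle. The step I expect to be hardest is the unclasping: one must check that the slides converting ``$n$ twists'' into ``$n$ blow-downs'' are legitimate and keep careful track of framings and of linking numbers with the $1$-handle and with $\gamma_n$; this is precisely the place where a \emph{genuine} fishtail neighborhood is needed, since it is its vanishing cycle that provides the slide unlinking the $\pm1$-framed unknots from everything else so that the blow-downs leave no residue. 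A secondary, more routine, task is to pin down the identification ``annulus twist $=$ log transformation in the vanishing-cycle direction'' by matching Osoinach's regluing diffeomorphism of $T^3$ with the standard log-transform gluing once fishtail coordinates on $N$ have been fixed.
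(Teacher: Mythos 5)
Your strategy --- realize the passage from $W(K_0)$ to $W(K_n)$ as a log transformation along a torus sitting inside an embedded fishtail neighborhood, and then invoke the triviality of Dehn-twist log transforms along a fishtail fiber --- is exactly the \emph{alternative} argument that the paper carries out in Section \ref{sec:log}, but only for the single example $K=8_{20}$. The step you flag with ``I expect that \dots forms a fishtail neighborhood $N$'' is precisely where your proof is incomplete, and it is not a routine verification: for a general slice knot $K$ with an annulus presentation, nobody has produced the embedded fishtail neighborhood in $W(K_n)$ whose regular fiber is the torus supporting the annulus twist and whose vanishing cycle points in the twisting direction. The paper explicitly poses this as an open problem (Question 1 in Section 6). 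Even in the $8_{20}$ case the fishtail is found only after two nontrivial preparatory steps (Lemma \ref{lem:handle_decomposition} and Figure \ref{embedding}): one must first exhibit a curve $\gamma$ in $\partial W_n$ that becomes a $-1$-framed unknot under an explicit identification $\partial W_n \approx S^3$, attach a canceling 2/3-handle pair along it to create the $-1$-framed vanishing cycle, and only then recognize a sub-handlebody as $F\cup(\text{1-handle})$. None of this is supplied by general position or by the mere presence of the annulus $A$. A smaller but real issue: what is needed is the $n$-fold \emph{Dehn twist} along $T$ parallel to the vanishing cycle (multiplicity $p=1$, auxiliary multiplicity $q=n$), not a ``multiplicity-$n$'' log transformation; log transforms of multiplicity $\neq 1$ are exactly the ones that can change smooth structures, so the local claim as you state it (``any multiplicity \dots in the vanishing-cycle direction'') is not the statement you can use, and Gompf's lemma (\cite{G2}, Lemma \ref{Gompf} here) covers only the $p=1$ case.

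For contrast, the paper's actual proof of the general theorem avoids log transformations entirely. It draws an explicit handle decomposition of $W(K_n)$ (ribbon 1-handles for the slice disk complement, plus the $2$-handle along $(f_n\circ g)^{-1}(\mu_n)$ with framing $n^2-n$), locates a curve $\gamma$ that is a $-1$-framed unknot in $\partial W_n\approx S^3$, inserts a canceling 2/3-handle pair along $\gamma$, and slides the main $2$-handle over it to unwind one twist, giving $W_n\approx W_{n-1}$ and hence $W_n\approx W_0\approx B^4$ by induction. This is morally the ``slide over the vanishing cycle'' move you describe, but performed directly in the global diagram, so that no embedded fishtail neighborhood ever has to be exhibited. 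If you want to complete your proof you should either carry out that direct handle calculus, or restrict your claim to $K=8_{20}$ and supply the explicit fishtail embedding.
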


Here recall the slice-ribbon conjecture.
A knot in  $S^3=\partial B^4$ is called \textit{slice}
if it bounds a smoothly embedded disk in $B^4$.
A knot in $S^3$ is called \textit{ribbon} 
if it bounds a smoothly immersed disk in $S^3$ 
with only ribbon singularities.
It is well known that every ribbon knot is  slice.
The \textit{slice-ribbon conjecture} states that 
any slice knot is ribbon.
There are some affirmative results on the slice-ribbon conjecture,
see \cite{{CD}, {GJ}, {Le}, {Li}}.
On the other hand,
Gompf, Scharlemann and Thompson \cite{GST}
demonstrated slice knots which might not be ribbon.
Similarly,  
there is no apparent reason for the slice knots $K_n$ in Theorem \ref{thm:main1} 
to  be ribbon. 

Let $\mathcal{K}_n$ $(n \ge 0)$ be the knot 
 obtained from $8_{20}$ 
 (with an appropriate annulus presentation)
by the $n$-fold annulus twist.
These are the simplest slice knots obtained by our method,
and were  studied by Omae \cite{Om}  in  a different viewpoint.
We will prove that these slice knots are ribbon.
To prove this, 
we introduce  a sufficient condition for given slice knots to be ribbon.

\renewcommand{\thedummmmy}{\ref{lem:ribbon}}
\begin{dummmmy}
Let HD be a handle diagram of $B^4$.
Suppose that HD is changed into the empty  handle diagram of $B^4$
by handle slides, adding or canceling 1/2-handle pairs, and isotopies.
Then the belt sphere of  any 2-handle of $HD$ is a ribbon knot.
\end{dummmmy}

Our second result is the following.

\renewcommand{\thedummy}{\ref{thm:ribbon}}
\begin{dummy}
The slice knot $\mathcal{K}_n$ $(n \ge 0)$ is ribbon.
\end{dummy}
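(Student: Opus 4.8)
The plan is to upgrade sliceness to ribbonness by exhibiting, uniformly in $n$, an explicit ribbon disk for $K_n$. A concrete argument is needed here: the slice disk supplied by Theorem~\ref{thm:main1} is read off from a Kirby-calculus proof that $W(K_n)\approx B^4$ and carries no evident control on its critical points. I would start from the explicit picture in \cite{Om} (see also \cite{AJOT}): the annulus presentation $(A,b)$ of $8_{20}$ and the band-sum diagram of $K_n$ it produces, in which the effect of the $n$-fold annulus twist is confined to a single twist region carrying $n$ full twists.

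The guiding observation is that a knot presented by an annulus visibly has fusion number one. If $K$ is obtained from the two boundary circles $c_1\sqcup c_2$ of an unknotted, untwisted annulus $A$ by attaching the band $b$, then band surgery along the co-core of $b$ returns $c_1\sqcup c_2$, a $2$-component unlink, so $K$ bounds a ribbon disk with a single $1$-handle. The strategy is therefore to show that $K_n$ again carries such a structure: track $(A,b)$ through the $n$-fold annulus twist and recover an annulus presentation $(A_n,b_n)$ of $K_n$ in which the band $b_n$ has absorbed the $n$ twists while $A_n$ stays unknotted and untwisted. If $K_n$ does not literally retain an annulus presentation, the same picture should still display an explicit fusion band $\beta_n$ for $K_n$, obtained by isotoping the fusion band of $8_{20}$ across the twist region; either way one concludes that $K_n$ fuses to a $2$-component unlink.

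Concretely I would: (i) draw $K_n$ with the $n$-twist region isolated; (ii) locate the candidate annulus $A_n$ and band $b_n$ (equivalently, the fusion band $\beta_n$ and the two unknotted, unlinked circles it should create), reducing the assertion to a purely diagrammatic statement; and (iii) verify that statement by induction on $n$, by means of a ``twist-absorption'' move that slides one full twist of the annulus across the band and so reduces the parameter-$n$ diagram to the parameter-$(n-1)$ one, the base case being $K_0=8_{20}$ (or $K_1$, by \cite{AT}).

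The main obstacle lies in step (iii), and specifically in the bookkeeping of band framings: each pass of a full twist across $b_n$ alters its framing, and one must check that the accumulated change still leaves $A_n$ untwisted, so that the final band surgery produces a genuine $2$-component unlink and not, say, a Hopf link or a $(2,2k)$-torus link. This is precisely where the particular features of the chosen annulus presentation of $8_{20}$ are used, and I expect it to be the technical heart of the proof: once the twist-absorption move is arranged so as to be manifestly uniform in $n$, the ribbon disk for $K_n$ follows.
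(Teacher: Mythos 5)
Your strategy is purely $3$-dimensional and rests on a ``guiding observation'' that is not correct as stated. In an annulus presentation $(A,b,c)$ the knot $K$ is the band sum of the two components of $\partial A$ \emph{in the surgered sphere} $M_c(-1)\approx S^3$, not in the original $S^3$ where $A$ is trivially embedded. Blowing down the $(-1)$-framed unknot $c$ turns the flat annulus of Figure~\ref{fig:Def-BP} into the clasped annulus of Figure~\ref{fig:annulus}, and its two boundary circles no longer form a $2$-component unlink. So undoing the band $b$ does not produce an unlink, and an annulus presentation does not exhibit fusion number one. Indeed, if your observation held, every knot admitting an annulus presentation would automatically be ribbon, whereas the paper only obtains $g_*\le 1$ for such knots (Corollary~\ref{cor:main1}) and treats sliceness as a genuine hypothesis in Theorem~\ref{thm:main1}; the fusion band for $8_{20}$ in Figure~\ref{ribbon} is a separate, ad hoc band, not the co-core of $b$.

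The more serious gap is your step (iii). The $n$-fold annulus twist is a cut-and-reglue along $A'$ (equivalently $\pm 1/n$-surgeries on its two boundary curves), and it twists \emph{all} strands meeting $A'$ --- including the fusion band and the two components it is supposed to separate, since $b$ has ribbon intersections with $\mathrm{int}\,A$. There is no reason the images of the two disjointly embedded disks remain disjoint and unknotted after the regluing, so the asserted ``twist-absorption move'' carrying a fusion structure for $K_{n-1}$ to one for $K_n$ is exactly the content that needs proof, and no mechanism is offered. The paper's actual argument is genuinely $4$-dimensional and avoids this issue: Lemma~\ref{Omae} realizes $K_n$ as the belt sphere of a $2$-handle in an explicit handle decomposition of $W_n\approx B^4$; Lemma~\ref{lem:ribbon} shows that such a belt sphere is ribbon whenever that decomposition can be trivialized using only handle slides and $1/2$-handle pair creations and cancellations (no $2/3$-handle pairs); and the remaining work is the explicit handle calculus of Figures~\ref{ribbon1}--\ref{ribbon3}. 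The ribbon presentation of Figure~\ref{ribbonhyoji} is then read off from that calculus, with the bands produced by the intersections of a belt disk with the attaching spheres of the surviving $2$-handles, rather than from a single fusion band tracked through the twists. To salvage a diagrammatic proof you would need to exhibit the bands for $K_n$ explicitly and verify the unlink condition uniformly in $n$, which is essentially what the handle calculus accomplishes by other means.
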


We outline the proof as follows:
By the construction,
$\mathcal{K}_n$ $(n \ge 0)$ is  isotopic to the belt-sphere
of  a $2$-handle of  a certain handle diagram $HD$ of $B^4$  without 3-handles,
see the proof of Lemma  \ref{h_slice}.
By (rather long) handle calculus,
we prove that 
$HD$ is changed into the empty  handle diagram of $B^4$
by handle slides, canceling 1/2-handle pairs,  and isotopies.
By Lemma \ref{lem:ribbon},
$\mathcal{K}_n$ is ribbon.

In Section 6,  we propose two conjectures.
The first one is  the  following.

\renewcommand{\thedummmy}{\ref{conj:2}}
\begin{dummmy}
Let $HD$ be a handle diagram  of $B^4$  without 3-handles.
Then the belt-sphere of any $2$-handle of $HD$
is a ribbon knot.
\end{dummmy}

Note that, if Conjecture \ref{conj:2} is true, then 
slice knots in Theorem \ref{thm:main1}  and Gompf, Scharlemann and Thompson's slice knots
in \cite{GST} are  ribbon.
In this sense,  
to solve Conjecture \ref{conj:2} is the first step toward 
an affirmative answer   to the slice-ribbon conjecture.
For the details, see Section 6.

This paper is organized  as follows:
In Section 2, 
we recall some definitions which we will use.
In Section 3, 
we prove the main result (Theorem \ref{thm:main1}).
First, 
we give a handle decomposition of $W(K_n)$.
After adding a canceling 2/3-handle pair to  $W(K_n)$ suitably,
we prove  $W(K_n) \approx  B^4$.
In Section 4, 
we give an alternative proof of Theorem \ref{thm:main1}
in a special case
by a log transformation.
In Section 5, 
we give a sufficient condition for  given slice knots
to be ribbon  (Lemma \ref{lem:ribbon}).
As an application,
we prove Theorem \ref{thm:ribbon}.
In Section 6, 
we give  two conjectures.\\

\nbf{Notations.} 
We denote by
 $M_K(n)$ the 3-manifold obtained
from $S^3$ by $n$-surgery on $K$ and
by $X_{K}(n)$ the smooth $4$-manifold obtained from 
$B^4$ by attaching a $2$-handle along $K$ with framing $n$.w
The symbol $\approx$ stands for a diffeomorphism.
We denote by $\mathcal{K}$ the knot $8_{20}$
and by  $\mathcal{K}_n$  $(n \in \Z)$ the knot obtained from $8_{20}$ with the annulus presentation
in Figure \ref{fig:Def-BP}.
In figures, we denote  by  $\sim$ an isotopy
and by  $\to$  a handle slide, a handle canceling or a blow-up.

\subsection*{Acknowledgments}
The first author was supported by JSPS KAKENHI Grant Numbers 23840021 and 13J05998.
The second  author was  supported by  JSPS KAKENHI Grant Number 24840006.
The first author thanks Charles Livingston for explaining Lemma \ref{AJOT} several years ago,
which is the starting point of this project.
He also thanks Riccardo Piergallini for  stimulating discussions on ribbon knots and ribbon disks.
We   thank the anonymous referees for very careful readings of this paper and  helpful comments.
\section{Preliminary}\label{sec:Knots} 

In this section, 
we define
an annulus twist, annulus presentation and
recall the knots constructed by Omae and homotopy 4-balls.\\

\nbf{Annulus twist.} 
Let $V$ be the solid torus standardly embedded in $S^3$
and $V'$  the $3$-manifold as in Figure  \ref{fig:solid_torus}.
Then the following is known.

\begin{figure}[htb]
\includegraphics[width=1\textwidth]{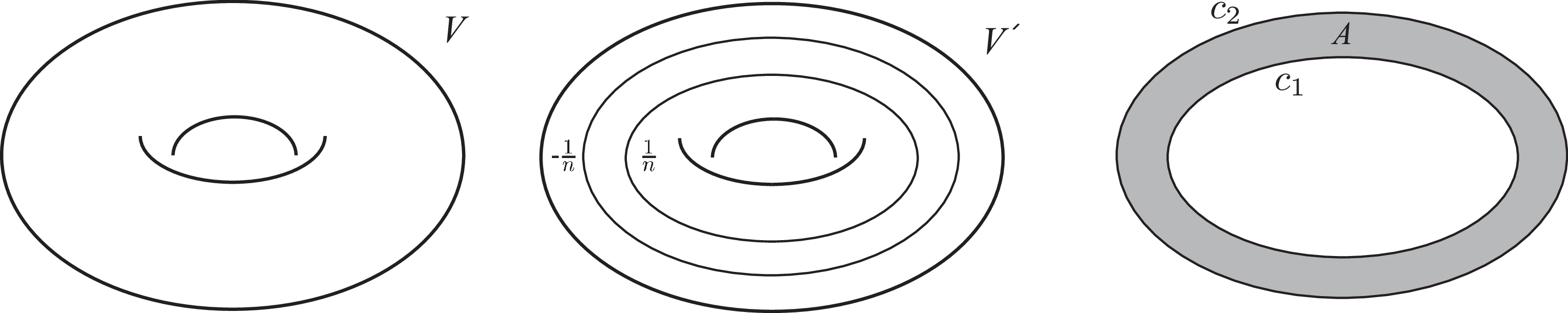}
\caption{The definitions of $V$,  $V'$, and $A$, $c_1$, $c_2$.}
\label{fig:solid_torus}
\end{figure}

\begin{lem} [cf.~Theorem 2.1 in \cite{Os}]\label{lem:Osoinach}
There exists a (natural) diffeomorphism 
\[ \varphi_n  : V' \longrightarrow  V\]
such that $\varphi_n|_{\partial V'} =id$.
\end{lem}

\begin{rmk}
Osoinach  \cite{Os} considered the diffeomorphism  $\varphi_n^{-1}$.
\end{rmk}

Let $A \subset \R^2 \cup \{ \infty \} \subset S^3$ be an embedded annulus 
and
set $\partial A = c_1 \cup c_2$ as in Figure \ref{fig:solid_torus}. 
An \textit{$n$-fold annulus twist along $A$} is the following operation:\\
\vskip -3mm

\noindent
(1) Regard $c_1$ 
as a $\dfrac{1}{n}$-framed knot
and $c_2$ as  a $-\dfrac{1}{n}$-framed knot for $n \in \Z$, and
(2) take a solid torus $V'$ which is a  neighborhood of $A$, and\\
(3) apply the diffeomorphism $\varphi_n$ in Lemma \ref{lem:Osoinach}.\\ 
\vskip -3mm

\noindent
A $1$-fold annulus twist along $A$ is  called an
 \textit{annulus twist along $A$}.\\


\nbf{Annulus presentation.}
The first author, Jong, Omae and Takeuchi \cite{AJOT} introduced 
the notion of  an annulus presentation\footnote{In \cite{AJOT},
it was called a band presentation.}
of a knot for which we can associate an annulus.

We recall the definitions of 
an annulus presentation of a knot
  as follows:
Let $A \subset \R^2 \cup \{ \infty \} \subset S^3$ be a trivially embedded annulus
with an $\varepsilon$-framed unknot $c$ in  $S^3$ as shown in the left side of Figure~\ref{fig:Def-BP},
where $\varepsilon = \pm 1$. 
Take an embedding of a band $b$: $I \times I \to S^3$ such that 
\begin{itemize}
\item $b(I \times I) \cap \partial A = b(\partial I \times I)$, 
\item $b(I \times I) \cap \text{int} A$ consists of ribbon singularities, and
\item $b(I \times I)  \cap c= \emptyset$,
\end{itemize}
where $I = [0,1]$. 
Throughout this paper,
we assume that  $A \cup b(I \times I)$ is orientable.
This means that we deal with only 0-framed knots,
see \cite{AJOT}. 
For simplicity,
we also assume  that  $\varepsilon=-1$.
If a knot $K \subset  S^3$ is isotopic to the knot $\left( \partial A \setminus b(\partial I \times I)\right) \cup b( I \times \partial I)$
in $M_{c}(-1) \approx S^3$, 
then we say that $K$ admits an \textit{annulus presentation} $(A,b,c)$.
A typical example of an annulus presentation of a knot is given in Figure~\ref{fig:Def-BP}.

\begin{figure}[htb]
\includegraphics[width=1.0\textwidth]{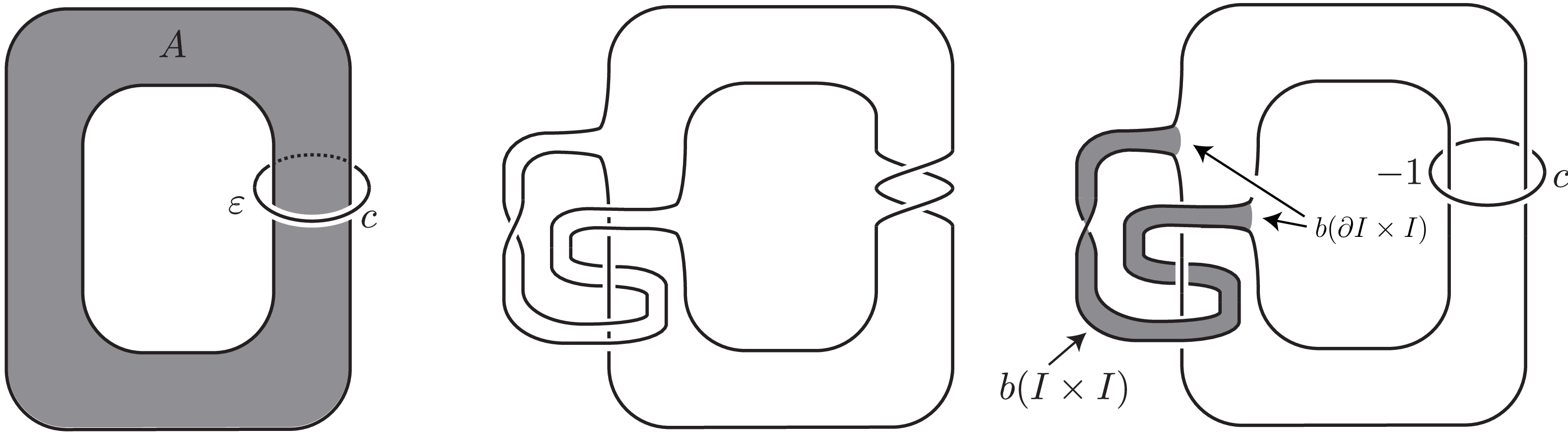}
\caption{The knot $8_{20}$ depicted in the center admits an annulus presentation as in the right side.}
\label{fig:Def-BP}
\end{figure}

\begin{figure}[htb]
\includegraphics[width=1.0\textwidth]{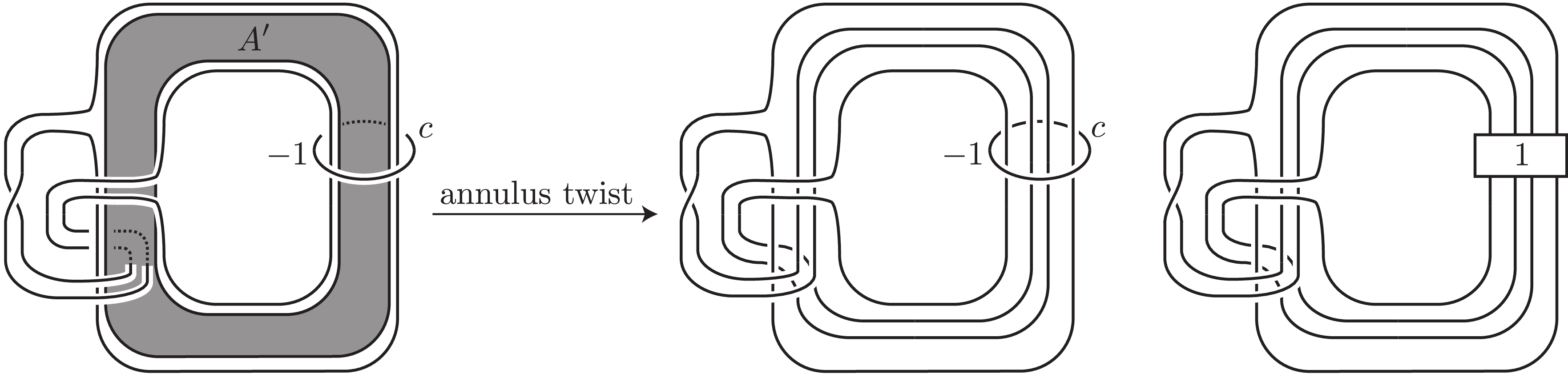}
\caption{The associated annulus $A'$ (left side), an annulus twist along $A'$, and the resulting knot (right side).}
\label{fig:Ex-AT}
\end{figure} 

Let $K$ be a knot admitting an annulus presentation $(A,b,c)$. 
Shrinking the annulus $A$ slightly,
we obtain an annulus $A' \subset A$ 
as shown in Figure~\ref{fig:Ex-AT}. 
We apply the $n$-fold  ($n \in \Z$) annulus twist  along $A'$
and  blow down the $-1$-framed unknot $c$.
Figure~\ref{fig:Ex-AT} illustrates the case $n=1$.
We call the resulting knot 
\textit{the knot obtained from $K$
by the $n$-fold annulus twist} without mentioning $A'$.
The first author, Jong, Omae and Takeuchi proved the following:

\begin{lem} [\cite{AJOT}] \label{AJOT}
Let $K$ be a knot admitting an annulus presentation
and $K_n$ ($n \in \Z$)  the knot obtained
from $K$ by the $n$-fold annulus twist.
Then 
\[ 
M_{K}(0)  \approx M_{K_n}(0).
\]
If $K$ is a slice knot, 
then $K_n$ bounds a smoothly embedded disk in a homotopy 4-ball $W(K_n)$
such that $\partial W(K_n) \approx S^3$.
\end{lem}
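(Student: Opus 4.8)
\medskip
\noindent\textbf{Sketch of proof.}
The plan is to treat the two assertions in turn: the three-dimensional statement is the crux, and the four-dimensional one then follows from a handle construction built on a slice disk.

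For $M_K(0)\approx M_{K_n}(0)$, note first that $K$ meets only the interior of the associated annulus $A'$, so $K$, $c_1$ and $c_2$ are pairwise disjoint curves in $S^3$ and the surgeries involved commute. Performing the annulus-twist surgeries last, one gets $M_{K_n}(0)\approx\big(M_K(0);\,c_1\mapsto 1/n,\,c_2\mapsto -1/n\big)$, so it suffices to show that this pair of surgeries, now carried out on curves $c_1\cup c_2\subset M_K(0)$, is trivial. The key point is that $c_1\cup c_2$ \emph{bounds an embedded annulus $\widetilde A$ in $M_K(0)$}: the orientability of $A\cup b(I\times I)$ forces $K$ to meet $A'$ in a cancelling collection of points (two of them, for a standard annulus presentation), so after deleting small meridional disks of $K$ from $A'$ the resulting meridian boundary circles of $K$ can be joined, inside the solid torus that the $0$-surgery glues in, by an embedded annulus; splicing this onto $A'\setminus N(K)$ produces $\widetilde A$, which coincides with $A'$ near $c_1$ and near $c_2$. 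Now $c_1$ and $c_2$ are the two boundary curves of the embedded annulus $\widetilde A$, hence parallel curves on $\partial N(\widetilde A)$ carrying parallel $\widetilde A$-surface framings; since the $n$-fold annulus twist is by definition the $(1/n,-1/n)$-surgery on $c_1\cup c_2$ taken with respect to exactly these surface framings, the two surgeries twist the solid torus $N(\widetilde A)$ in opposite senses and their composite is isotopic to the identity rel $\partial N(\widetilde A)$. (This last step is Osoinach's annulus-twist observation.) Hence $M_{K_n}(0)\approx M_K(0)$.

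For the second assertion, let $D\subset B^4$ be a slice disk for $K$ and put $V:=B^4\setminus\operatorname{int}N(D)$; then $\partial V=M_K(0)$ (the slice-disk framing is $0$), $V$ is a homology $S^1\times D^3$, and $\pi_1(V)$ is normally generated by the meridian $\mu_K$, which represents a generator of $H_1(M_K(0))\cong\Z$. Let $\psi\colon M_K(0)\to M_{K_n}(0)$ be the diffeomorphism from the first part, let $K_n^{*}\subset M_{K_n}(0)$ be the core of the surgery solid torus, and recall that reversing the $0$-surgery on $K_n$ is a single $2$-handle attachment: $C:=\big(M_{K_n}(0)\times I\big)\cup(\text{$2$-handle along }K_n^{*}\times\{1\})$, taken with the framing for which the top end is $S^3$, is a cobordism from $M_{K_n}(0)$ to $S^3$ whose belt sphere is exactly the knot $K_n\subset S^3$. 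Set $W(K_n):=V\cup_{\psi}C$. Then $\partial W(K_n)\approx S^3$, and the cocore of the $2$-handle is a smoothly embedded disk in $W(K_n)$ bounded by $K_n$. Absorbing the collar $M_{K_n}(0)\times I$, the manifold $W(K_n)$ is obtained from $V$ by attaching one $2$-handle along the circle $\gamma:=\psi^{-1}(K_n^{*})\subset\partial V$; since $\psi$ is a diffeomorphism, $[\gamma]$ normally generates $\pi_1(M_K(0))$, hence normally generates $\pi_1(V)$ (as $\pi_1(\partial V)\to\pi_1(V)$ is onto), so the $2$-handle kills $\pi_1$. A Mayer--Vietoris computation then shows that $W(K_n)$ is acyclic, hence contractible, and therefore a homotopy $4$-ball with boundary $S^3$.

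The step I expect to be the main obstacle is the three-dimensional one: the careful bookkeeping that the $(1/n,-1/n)$-surgery on $c_1\cup c_2$, once viewed inside $M_K(0)$, is precisely a cancelling twist along $N(\widetilde A)$. This uses $K\cdot A'=0$ so that the meridional punctures of $A'$ cancel; it uses that the auxiliary annulus inside the surgery solid torus glues on orientation-coherently (which is exactly where the hypothesis that $A\cup b(I\times I)$ be orientable enters); and it uses that the surface framings of $c_1$ and $c_2$ induced by $\widetilde A$ are the framings with respect to which the annulus twist is performed. The four-dimensional construction is then essentially formal; the only point needing care is that $\gamma$ normally generates $\pi_1(V)$, which follows since $\psi$ is a diffeomorphism and a meridian normally generates the fundamental group of a slice-disk complement.
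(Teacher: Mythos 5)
Your proposal is correct, and the two halves deserve separate comments. The four-dimensional half is essentially identical to what the paper does (in the proof of Lemma \ref{h_slice}): take the slice disk exterior $V=B^4\setminus \mathrm{int}\,N(D)$ with $\partial V\approx M_K(0)$, transport the meridian/core $\mu_n$ of the surgery torus of $M_{K_n}(0)$ back through the diffeomorphism, and attach a single $0$-framed $2$-handle there; the cocore is the slice disk for $K_n$ and the homotopy-ball verification is the routine van Kampen/Mayer--Vietoris computation you give. (One small point of hygiene: you justify that $\gamma$ normally generates $\pi_1(V)$ by asserting $\pi_1(\partial V)\to\pi_1(V)$ is onto; what you actually need, and what is immediate from van Kampen applied to $B^4=V\cup N(D)$, is that the meridian $\mu_K$ normally generates $\pi_1(V)$, and that $\mu_K$ lies in the normal closure of $\gamma$ in $\pi_1(\partial V)$.) The three-dimensional half is where you genuinely diverge: you reprove $M_K(0)\approx M_{K_n}(0)$ by Osoinach's original geometric mechanism --- cap off the punctures of $A'$ in pairs inside the $0$-surgery solid torus (this is exactly where the orientability of $A\cup b(I\times I)$, i.e.\ $K\cdot A'=0$, is used) to get an embedded annulus $\widetilde A$ in $M_K(0)$ with $\partial\widetilde A=c_1\cup c_2$, and then observe that $(1/n,-1/n)$-surgery on the two boundary curves with their $\widetilde A$-framings is just an $n$-fold twist along $\widetilde A$, hence a diffeomorphism. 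The paper instead exhibits the diffeomorphism $f_n$ by an explicit chain of Kirby moves (Figure \ref{diffeo}: blow-up, inverse annulus twist, handle slide, annulus twist, blow-down), following Teragaito. Your route is cleaner conceptually and explains \emph{why} the surgeries cancel; the paper's route produces a concrete diffeomorphism that can be tracked in diagrams, which is what later lets the authors compute the induced framing $n^2-n$ on $(f_n\circ g)^{-1}(\mu_n)$ and draw the handle picture of $W_n$ in Figure \ref{Wn1} --- information your argument does not directly supply but which the rest of the paper depends on.
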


\begin{rmk}
Under the assumption of Lemma \ref{AJOT},
we can also prove that $X_{K}(0) \approx  X_{K_n}(0)$, 
see  \cite{AJOT}. 
The homotopy 4-ball $W(K_n)$ in Lemma \ref{AJOT} depends on the choice of 
a diffeomorphism between $M_{K}(0)$ and $M_{K_n}(0)$.
\end{rmk}

\nbf{The knots obtained from $8_{20}$ and  homotopy 4-balls.} 
Let $8_{20}$ be the knot in the center of Figure \ref{fig:Def-BP}.
Then it admits an annulus presentation, 
see the right side of Figure \ref{fig:Def-BP}.
Let  $\mathcal{K}_n$ be the knot obtained
from $8_{20}$ by the $n$-fold annulus twist.
Omae studies these knots in \cite{Om}.
We prove the following two lemmas.

\begin{lem} \label{h_slice} 
The above knot $\mathcal{K}_n$ $(n \ge 0 )$ bounds a smoothly embedded disk 
in  a homotopy 4-ball $W_n$
such that $\partial W_n \approx S^3$ and 
it has the handle decomposition as in Figure \ref{Wn1}.
\end{lem}

\begin{figure}
\includegraphics[width=0.5\textwidth]{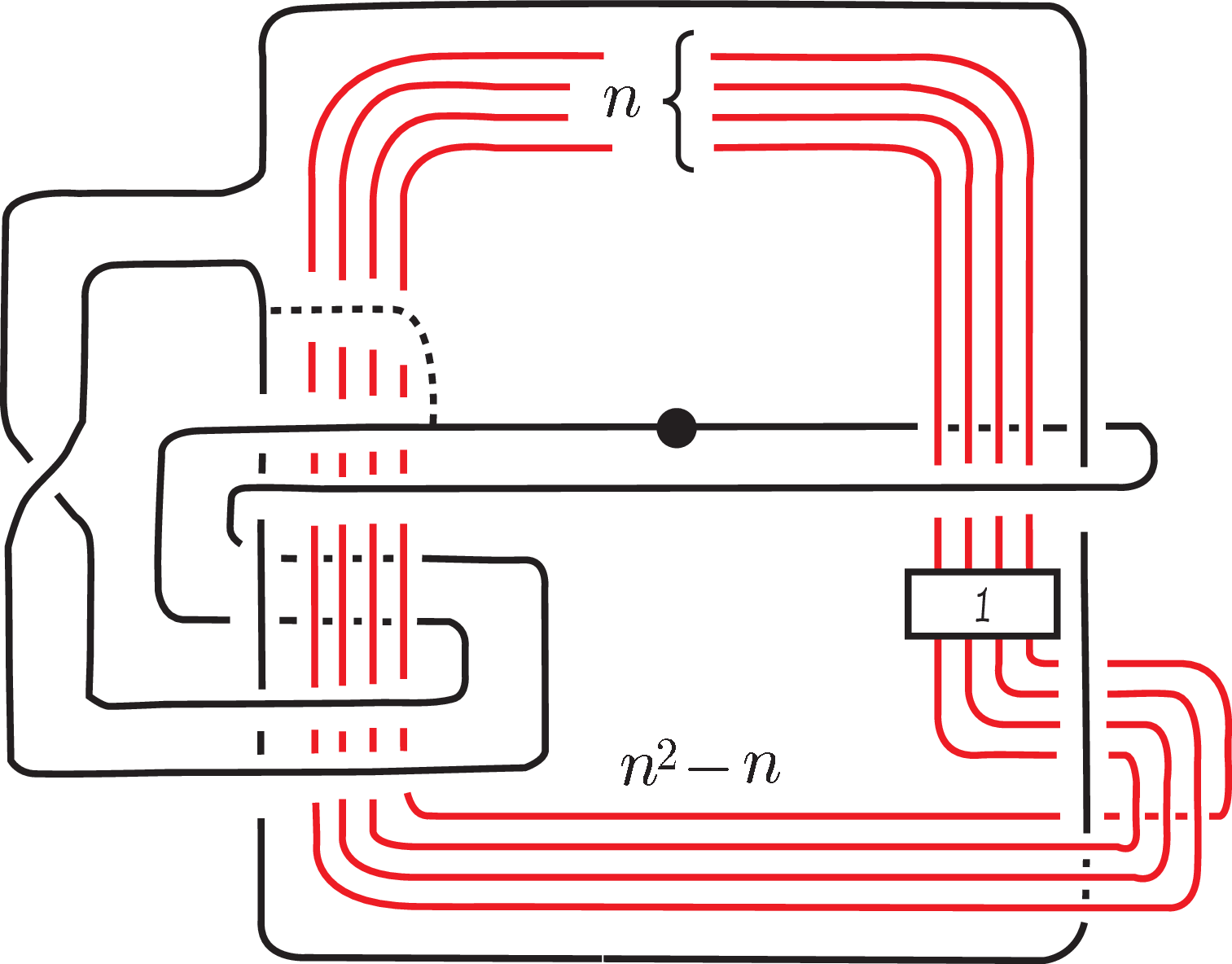}
\caption{A handle decomposition of $W_n$ $(n \ge 0 )$.}
\label{Wn1}
\end{figure} 
For the dotted circle notation for the complements of ribbon disks,
see subsection $1.4$ in  \cite{A} (see also subsection $6.2$ in \cite{GS}).
The first half of this lemma follows from
Lemma \ref{AJOT}.
For the sake of completeness,
we give the proof.

\begin{proof}
Let $f_n : M_{\mathcal{K}_0}(0)  \to M_{\mathcal{K}_n}(0)$ be the diffeomorphism
described in   Figure \ref{diffeo} (here we ignore the framed knots colored red).
For the detail of this  diffeomorphism, see \cite{Te}.

\begin{figure}[!hbt]
\includegraphics[width=1.0\textwidth]{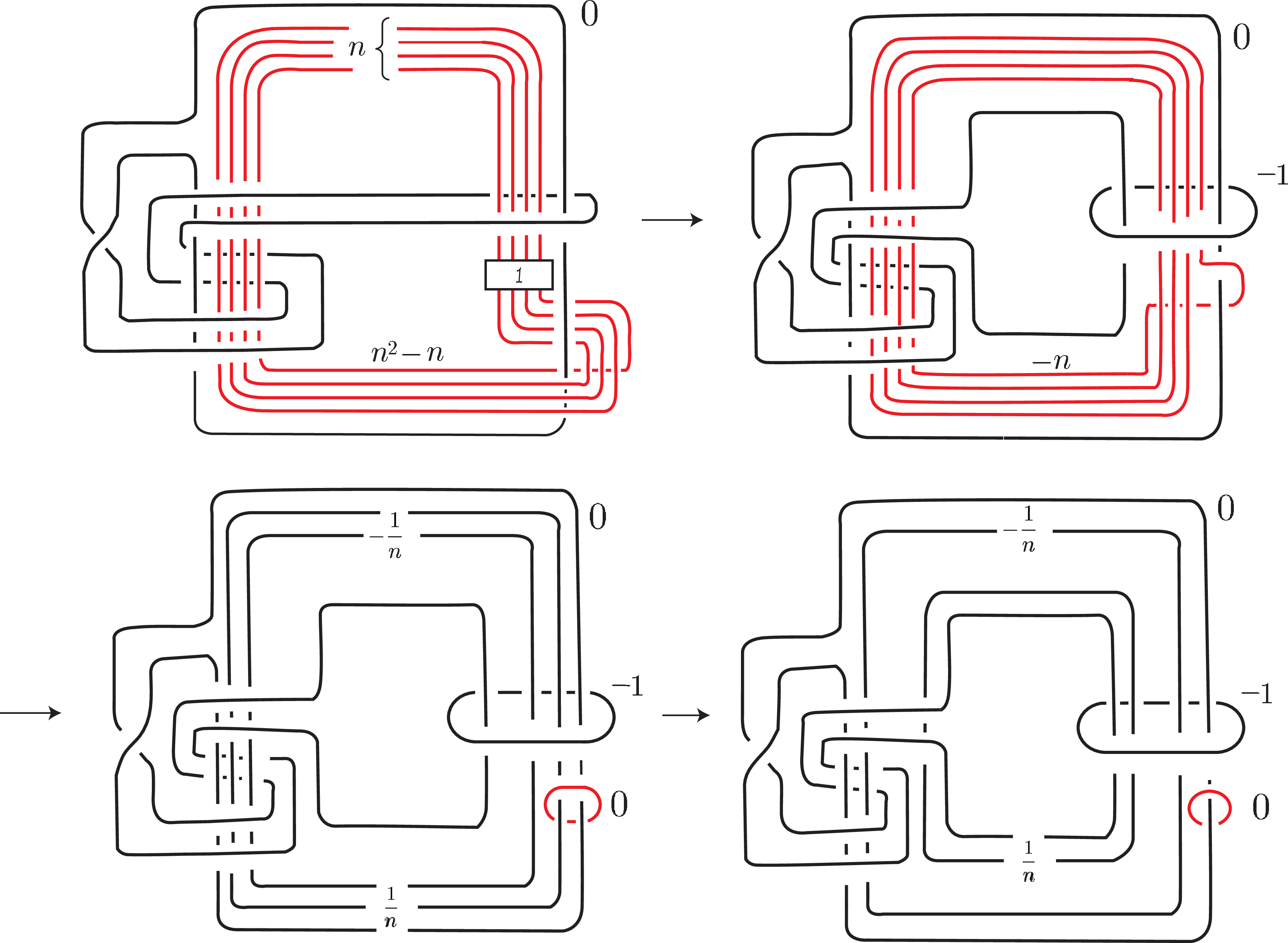}
\caption{A diffeomorphism from $M_{\mathcal{K}_0}(0)$ to  $M_{\mathcal{K}_n}(0)$.
$M_{\mathcal{K}_0}(0)$ is represented by the first picture.
The second picture is obtained by a blow up.
The third picture is obtained by applying $\varphi_n^{-1}$ in Lemma \ref{lem:Osoinach}.
The last picture is obtained by a handle slide.
Then we obtain  $M_{\mathcal{K}_n}(0)$ from the last picture
by applying $\varphi_n$ in Lemma \ref{lem:Osoinach} and a blow down.}
\label{diffeo}
\end{figure}

The knot $\mathcal{K}_0$ is ribbon.
Indeed, if we add a band along the dashed arc 
as in the left side of Figure \ref{ribbon},
then we obtain the two component unlink.
\begin{figure}
\includegraphics[width=0.8\textwidth]{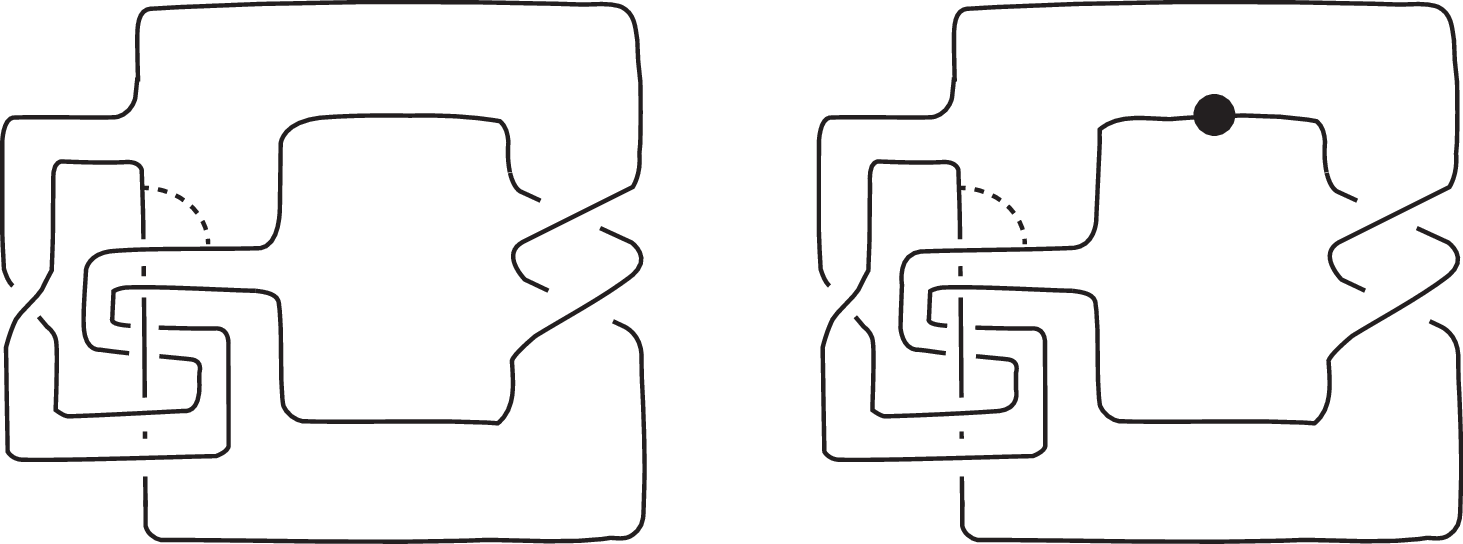}
\caption{$\mathcal{K}$ with a dashed arc  and the handle decomposition of $X$.}
\label{ribbon}
\end{figure} 
\begin{figure}
\includegraphics[width=1.0\textwidth]{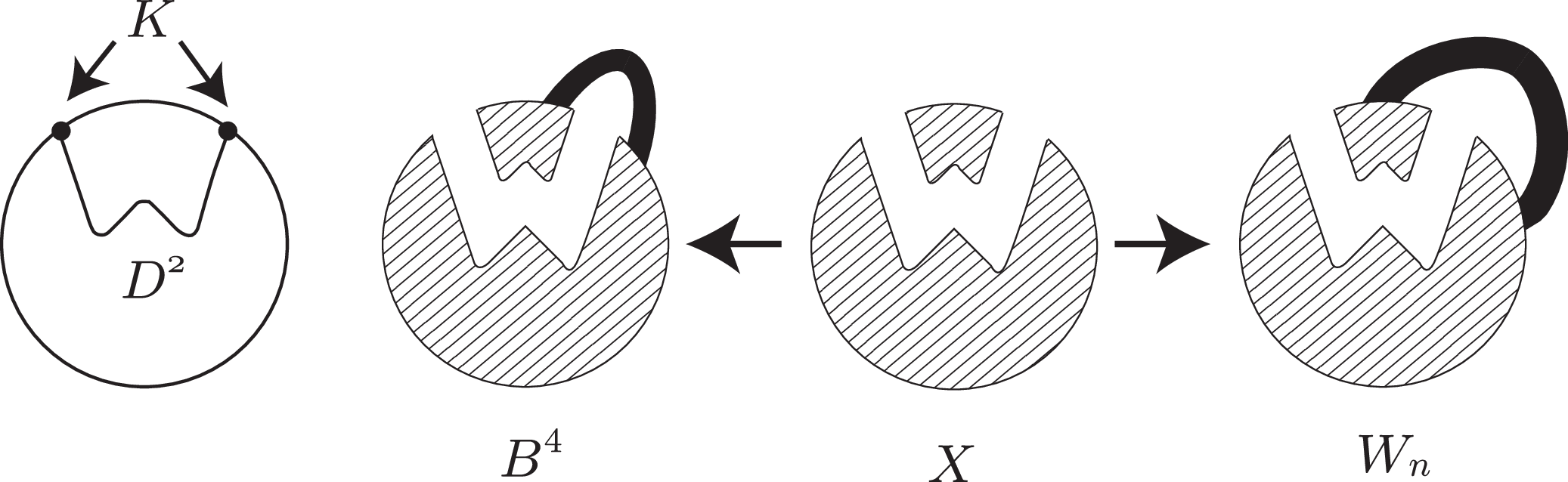}
\caption{}
\label{fig:h_4-ball}
\end{figure}
Let $D^2$ be the corresponding smoothly, properly embedded disk in $B^4$
such that $\partial D^2=\mathcal{K}_{0}$
and  $X$  the 4-manifold obtained from $B^4$ 
by removing  an open tubular neighborhood of $D^2$ in $B^4$
(see Figure \ref{fig:h_4-ball}).
Note that  $\partial X$ is (naturally) diffeomorphic to $M_{\mathcal{K}_0}(0)$.
If we attach a 2-handle along the meridian of $\mathcal{K}_0$
in $M_{\mathcal{K}_0}(0) \approx \partial X$ with framing $0$, 
then the resulting $4$-manifold is diffeomorphic to $B^4$.
The homotopy 4-ball $W_n$ is obtained from $X$
by attaching a 2-handle along the meridian $\mu_n$ of $\mathcal{K}_n$
in $M_{\mathcal{K}_n}(0) \approx \partial X$ with framing $0$.
Schematic pictures are given in Figure \ref{fig:h_4-ball}.
The knot $\mathcal{K}_n$ is isotopic to the boundary of the cocore disk of the
$2$-handle attached along  $\mu_n$.
Thus $\mathcal{K}_n$ bounds the cocore disk in $W_n$, that is, a smoothly
embedded disk in $W_n$.

Next, we draw a handlebody picture of $W_n$.
Recall that  $X$ has the handle decomposition  
as in the right of Figure \ref{ribbon}.
The diffeomorphism from $\partial X$ to $M_{\mathcal{K}_0}(0)$, 
denoted by $g$,
is given by changing the dot to $0$.
By the construction,
 $W_n$ is obtained from $X$
by attaching a 2-handle along $(f_{n} \circ g)^{-1} (\mu_n)$ in  $\partial X$ with a suitable framing.
By Figure \ref{diffeo}, 
the framing is $n^2-n$ and 
$W_n$ has the handle decomposition  as in Figure \ref{Wn1}.
\end{proof}

\begin{lem} \label{h_slice2} 
The above knot $\mathcal{K}_n$ $(n < 0 )$ bounds a smoothly embedded disk in  a homotopy 4-ball $W_n$
such that $\partial W_n \approx S^3$ and it has the handle decomposition as in Figure \ref{W_{-m}1}.
\end{lem}

\begin{figure}
\centering
\begin{overpic}[width=7cm, clip]{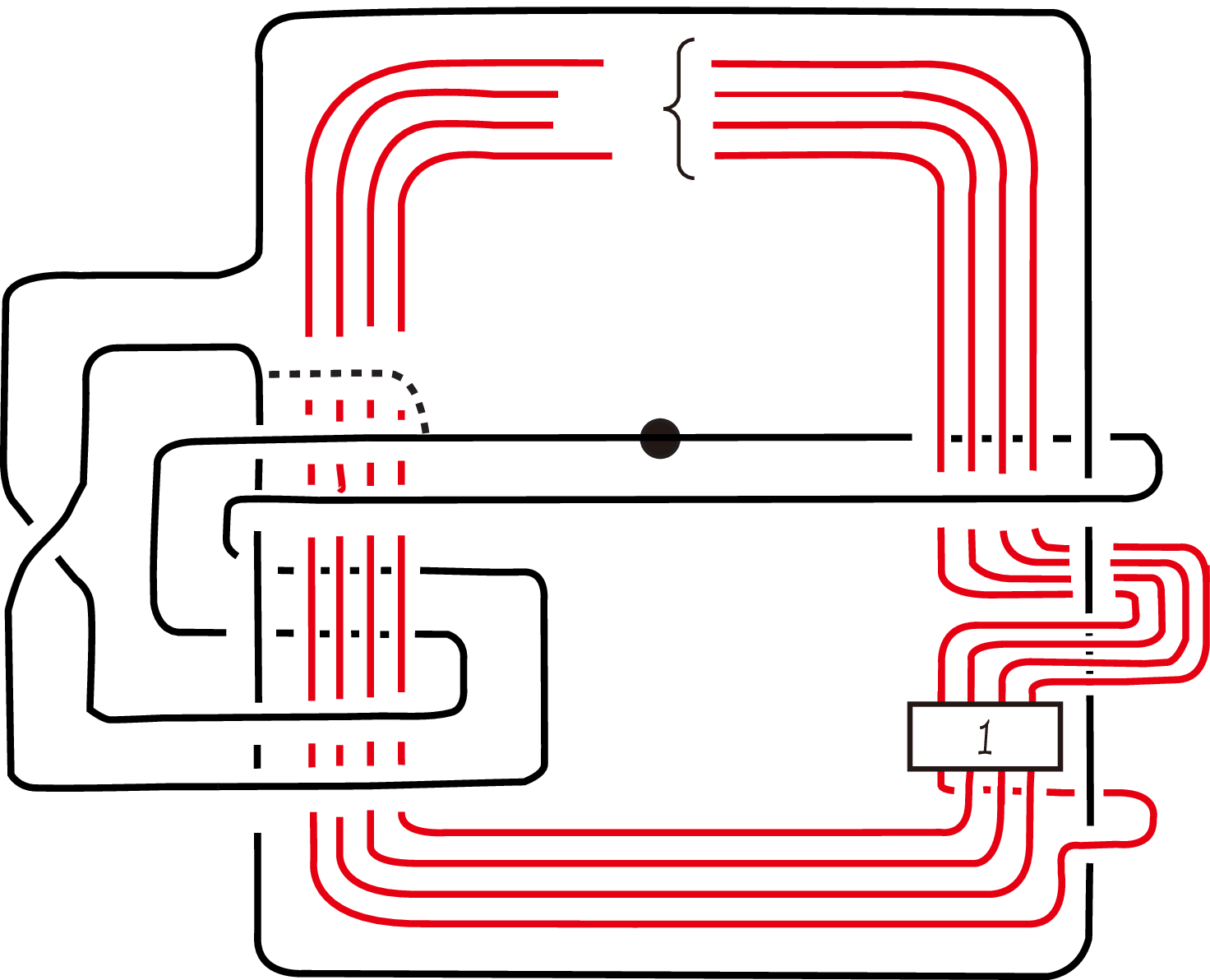}
\label{W_{-m}1}
  \put(87,140){$|n|$}
  \put(90,27){$n^2-n$}
\end{overpic}
\caption{A handle decomposition of $W_{n}$ $(n < 0 )$.}
\label{W_{-m}1}
\end{figure} 

\begin{proof}
Set $n=-m$ for some positive integer $m$.
Let $f_{-m} : M_{\mathcal{K}_0}(0)  \to M_{\mathcal{K}_{-m}}(0)$ be the diffeomorphism
described in  Figure \ref{diffeo2} (here we ignore the framed knots colored red).

\begin{figure}[!hbt]
\centering
\begin{overpic}[width=15cm,clip]{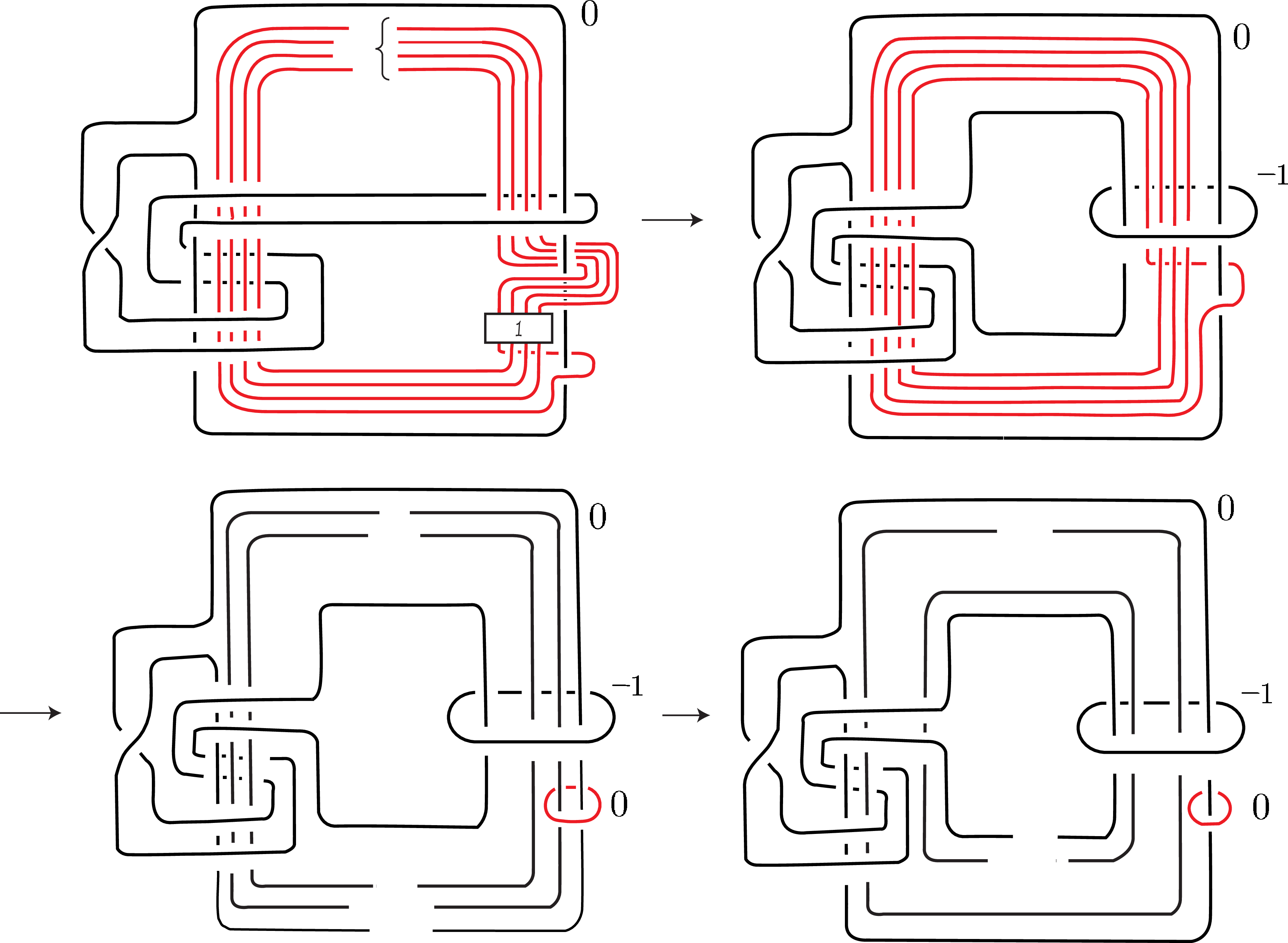}
  \put(113,293){$m$}
  \put(114,192){$m^2+m$}
  \put(340,190){$m$}
  \put(126,132){${\frac{1}{m}}$}
  \put(118,9){${-\frac{1}{m}}$}
  \put(335,133){${\frac{1}{m}}$}
  \put(332,25){$-{\frac{1}{m}}$}
\end{overpic}
\caption{A diffeomorphism from $M_{\mathcal{K}_0}(0)$ to  $M_{\mathcal{K}_{-m}}(0)$.
$M_{\mathcal{K}_0}(0)$ is represented by the first picture.
The second picture is obtained by a blow up.
The third picture is obtained by applying $\varphi_{-m}^{-1}$ in Lemma \ref{lem:Osoinach}.
The last picture is obtained by a handle slide.
Then we obtain  $M_{\mathcal{K}_{-m}}(0)$ from the last picture
by applying $\varphi_{-m}$ in Lemma \ref{lem:Osoinach} and a blow down.}
\label{diffeo2}
\end{figure}

The knot $\mathcal{K}_0$ is ribbon.
Indeed, if we add a band along the dashed arc 
as in the left side of Figure \ref{ribbon},
then we obtain the two component unlink.
Let $D^2$ be the corresponding smoothly, properly embedded disk in $B^4$
such that $\partial D^2=\mathcal{K}_{0}$
and  $X$  the 4-manifold obtained from $B^4$ 
by removing  an open tubular neighborhood of $D^2$ in $B^4$.
Note that  $\partial X$ is (naturally) diffeomorphic to $M_{\mathcal{K}_0}(0)$.
The homotopy 4-ball $W_{-m}$ is obtained from $X$
by attaching a 2-handle along the meridian $\mu_{-m}$ of $\mathcal{K}_{-m}$
in $M_{\mathcal{K}_{-m}}(0) \approx \partial X$ with framing $0$.
The knot $\mathcal{K}_{-m}$ is isotopic to the boundary of the cocore disk of the
$2$-handle attached along  $\mu_{-m}$.
Thus $\mathcal{K}_{-m}$ bounds the cocore disk in $W_{-m}$, that is, a smoothly
embedded disk in $W_{-m}$.

Next, we draw a handlebody picture of $W_{-m}$.
Recall that  $X$ has the handle decomposition  
as in the right of Figure \ref{ribbon}.
The diffeomorphism from $\partial X$ to $M_{\mathcal{K}_0}(0)$, 
denoted by $g$,
is given by changing the dot to $0$.
By the construction,
 $W_{-m}$ is obtained from $X$
by attaching a 2-handle along $(f_{{-m}} \circ g)^{-1} (\mu_{-m})$ in  $\partial X$ with a suitable framing.
By Figure \ref{diffeo2}, 
the framing is $m^2+m(=n^2-n)$.
Therefore  
$W_{-m}(=W_n)$ has the handle decomposition  as in Figure \ref{W_{-m}1}.
\end{proof}

\section{A construction of slice knots via annulus twists.}
In this section, 
we prove the following theorem  by introducing a canceling 2/3-handle pair.

\begin{thm} \label{thm:main1}
Let $K$ be a ribbon  knot admitting an annulus presentation and
 $K_n$ $(n \in \Z)$ the knot obtained from $K$ by the $n$-fold annulus twist.
 Then   the homotopy 4-ball  $W(K_n)$  associated to  $K_n$ is diffeomorphic to $B^4$, that is,
\[   W(K_n) \approx  B^4.\]
In particular, 
$K_n$ is a slice knot.
\end{thm}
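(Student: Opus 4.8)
The plan is to follow the handle-calculus strategy outlined in the introduction: start from the explicit handle decomposition of $W(K_n)$ coming from Lemma \ref{AJOT} and its refinement Lemma \ref{h_slice}, and simplify it to the empty diagram by an explicit sequence of handle moves. Concretely, a handle decomposition of $W(K_n)$ consists of a $1$-handle (the dotted circle coming from the ribbon complement $X$ of the slice disk of $K$) together with a single $2$-handle attached along $(f_n \circ g)^{-1}(\mu_n)$ with framing $n^2-n$ (in the $8_{20}$ case this is exactly Figure \ref{Wn1}; in general one gets the analogous picture from the annulus-presentation data). Since this handlebody is built from one $1$-handle and one $2$-handle it is automatically a homotopy $4$-ball, so the content is entirely to show it is the \emph{standard} $B^4$.

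The first step I would carry out is to introduce a canceling $2/3$-handle pair, as advertised in the section heading. The point is that a naive cancellation of the existing $1$-handle against the existing $2$-handle is obstructed: the attaching circle of the $2$-handle runs over the $1$-handle geometrically more than once, or with the wrong framing, so it cannot be cancelled directly. After adding the canceling $2/3$-pair, I would use the new $2$-handle to perform handle slides on the attaching circle of the old $2$-handle, unknotting it and reducing its geometric intersection with the $1$-handle to a single transverse point. This is where the specific structure of an annulus presentation — the fact that $A \cup b(I\times I)$ is orientable, that $c$ is a $\pm 1$-framed unknot, and the explicit form of the diffeomorphism $f_n$ in Figure \ref{diffeo} — must be used; the framing bookkeeping ($n^2-n$, the blow-up/blow-down in the annulus-twist diffeomorphism) has to come out so that after the slides the $2$-handle cancels the $1$-handle.

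After that cancellation I expect to be left with the new $2$-handle and $3$-handle (plus the $0$- and $4$-handles), and the task is to check that the $2$-handle is attached along an unknot with framing $0$, so that it cancels the $3$-handle and we are left with $B^4$. In practice one tracks the attaching circle of the $3$-handle through the whole sequence of moves and verifies it is still a standard $S^2$'s worth of data, or equivalently one shows the leftover $2$-handle is a $0$-framed unknot; either way the diagram collapses to the empty one, giving $W(K_n)\approx B^4$. The final clause, that $K_n$ is slice, is then immediate: by Lemma \ref{AJOT} (and Lemma \ref{h_slice}) $K_n$ bounds a smoothly embedded disk — the cocore of the $2$-handle attached along $\mu_n$ — in $W(K_n)$, and $W(K_n)\approx B^4$, so that disk sits in the standard $4$-ball.

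The main obstacle I anticipate is the middle step: choosing the canceling $2/3$-pair and the sequence of handle slides so that both cancellations go through simultaneously. The framing $n^2 - n$ grows quadratically in $n$, so the slides must be organized uniformly in $n$ (roughly $n$ slides of the $2$-handle over a $0$-framed component, each changing the framing by a controlled amount), and one has to keep the attaching circle of the auxiliary $3$-handle standard throughout. Getting a clean, $n$-independent description of this move — rather than an ad hoc argument for each $n$ — is the technical heart of the proof, and it is exactly the kind of thing best presented through a carefully labeled sequence of Kirby diagrams.
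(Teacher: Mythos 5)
Your proposal correctly identifies the general device (introduce a canceling $2/3$-handle pair and reorganize the diagram by handle slides), but as written it is a plan rather than a proof: the step you yourself flag as ``the technical heart'' --- producing the actual, $n$-uniform sequence of slides that makes both cancellations go through --- is exactly the content of the theorem, and it is left unsupplied. Two concrete things are missing. First, before a $2/3$-pair can be inserted at all, one must exhibit a specific curve $\gamma$ in the diagram of $W_n$ and verify, by tracking it through an explicit identification $\partial W_n \approx S^3$, that $\gamma$ with its chosen framing (here $-1$) is the $0$-framed unknot in $S^3$; otherwise the new $2$-handle is not canceled by any $3$-handle and the diffeomorphism type has been changed. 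Your proposal never addresses this verification, nor the related issue of why the auxiliary $3$-handle still cancels after the other $2$-handle has been slid over the new one. Second, your plan to unknot the attaching circle of the original $2$-handle and cancel it directly against the $1$-handle is not how the argument closes, and there is no evidence it can be made to work in one pass: the framing $n^2-n$ and the knotting of $(f_n\circ g)^{-1}(\mu_n)$ are not obviously removable by slides over a single auxiliary $0$-framed component.

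The paper resolves both points by an induction you do not have: a single canceling $2/3$-pair attached along the $-1$-framed curve $\gamma$ (checked to be the $0$-framed unknot in $\partial W_n\approx S^3$ via the diffeomorphism of Figure \ref{deformation2}), followed by one handle slide, turns the diagram of $W_n$ into the diagram of $W_{n-1}$ with a canceling pair attached; hence $W_n\approx W_{n-1}\approx\cdots\approx W_0$, and $W_0\approx B^4$ \emph{by construction} (it is the ribbon disk exterior with the meridional $2$-handle re-attached), so no direct $1$-handle/$2$-handle cancellation is ever needed. This inductive step is precisely the $n$-independent move you were looking for, and without it (or a worked substitute) the proposal does not establish $W(K_n)\approx B^4$. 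The reduction of the general slice $K$ to this picture, via a band presentation of the slice disk after adding an unlink, also needs to be spelled out, since for a general slice knot the disk exterior does not come with a one-$1$-handle, one-$2$-handle decomposition as you assume.
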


\begin{proof}
First we consider the case $\mathcal{K}=8_{20}$ with 
the annulus presentation 
as the right side of Figure \ref{fig:Def-BP} and $n \ge 0$.
By Lemma \ref{h_slice},
$\mathcal{K}_n$ bounds a smoothly embedded disk in the homotopy 4-ball $W_n$
given by the picture in Figure \ref{Wn1}.
We prove the  following claims.\\

\nbf{Claim 1.} $W_n$ $(n \ge 0)$ also has the handle decomposition given by the picture in Figure~\ref{Wn2}.

\begin{figure}[htpb]
\begin{center}
\includegraphics[width=.5\textwidth]{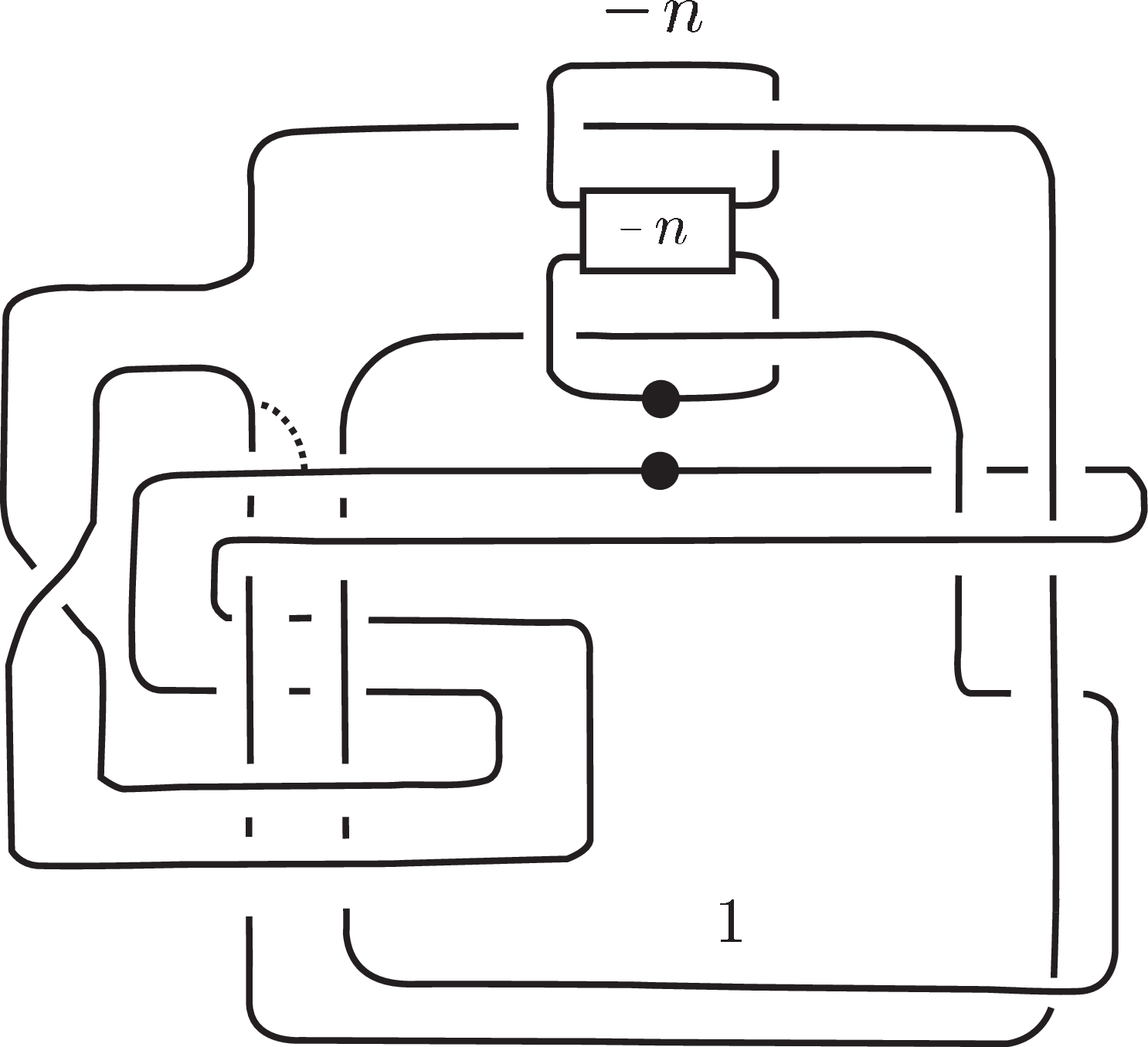}
\caption{A handle decomposition of $W_n$.}
\label{Wn2}
\end{center}
\end{figure}

\begin{proof}
Inserting a canceling 1/2-handle pair to $W_n$, 
we obtain the first picture in Figure~\ref{deformation1}.
Note that, in Figure ~\ref{deformation1},
we ignore the dashed arc because it is disjoint from the handle slides below.
By handle slides, 
we obtain the second picture.
By inserting a canceling 1/2-handle pair to $W_n$
and handle slides,
we obtain the third picture.
After a 1-handle slide (and a 2-handle slide, annihilating a canceling 1/2-handle pair
and isotopy), we obtain the  last picture.
Therefore, 
$W_n$ has the handle decomposition given by the picture in Figure~\ref{Wn2}.
\end{proof}

\begin{figure}[htpb]
\begin{center}
\includegraphics[width=1.\textwidth]{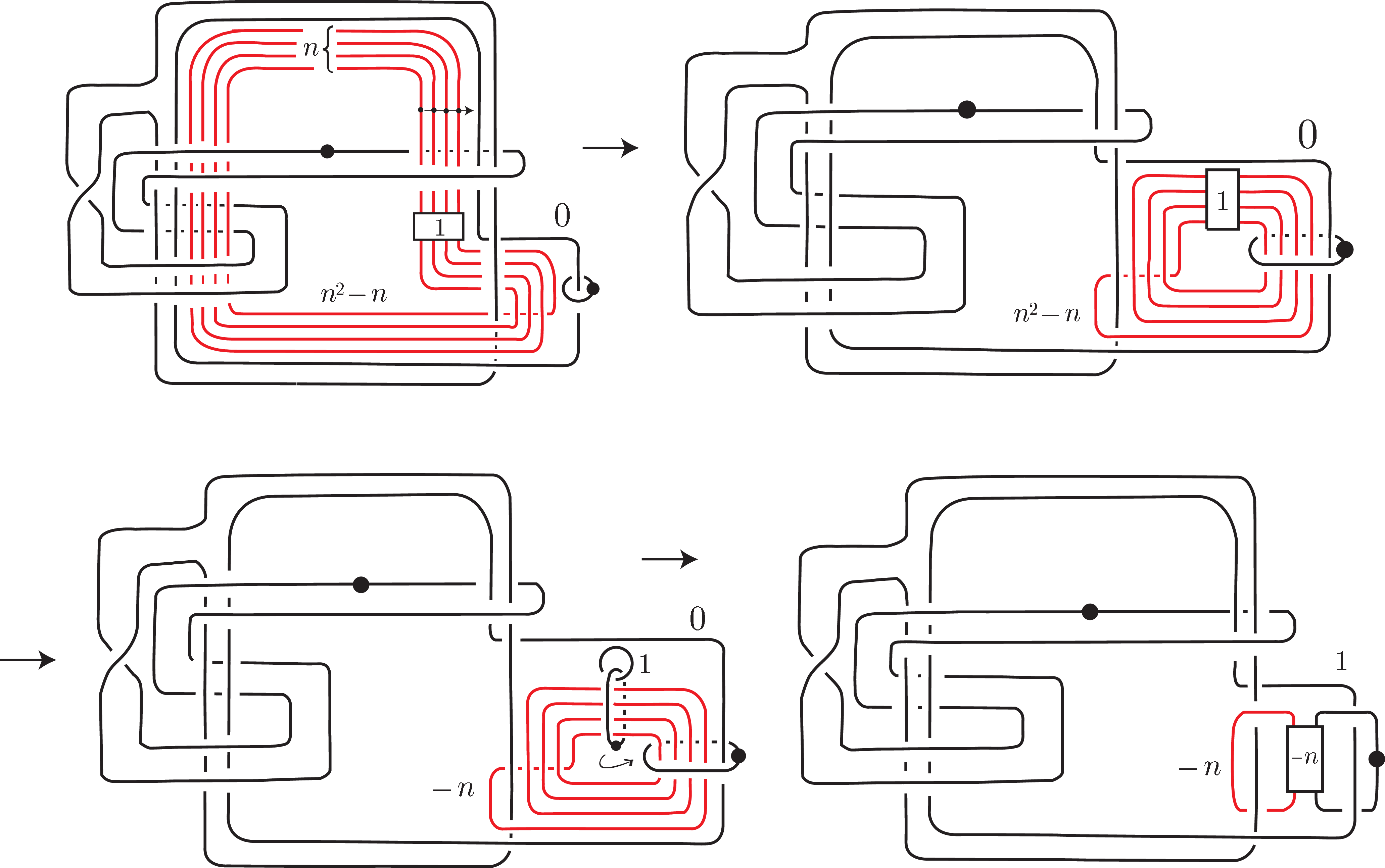}
\caption{Handle decompositions of  $W_n$ $(n \ge 0)$.}
\label{deformation1}
\end{center}
\end{figure}

\nbf{Claim 2.} $W_n \approx W_{n-1}$. 

\begin{proof}
We show that $\gamma, \lambda \subset \partial  W_n$ described in  Figure~ \ref{deformation2} are isotopic and 
each curve is the unknot in $\partial  W_n= S^3$.
By Claim~$1$, 
$W_n$ has the handle decomposition 
given by the first picture in Figure~\ref{deformation2}.
We replace the two dotted circles with the
zero-framed circles.
Then we obtain the second picture in Figure \ref{deformation2}.
Handle calculus in Figure \ref{deformation2}  illustrates 
the  diffeomorphism from $\partial  W_n$ to $S^3$.

\begin{figure}[htpb]
\begin{center}
\includegraphics[width=1.0\textwidth]{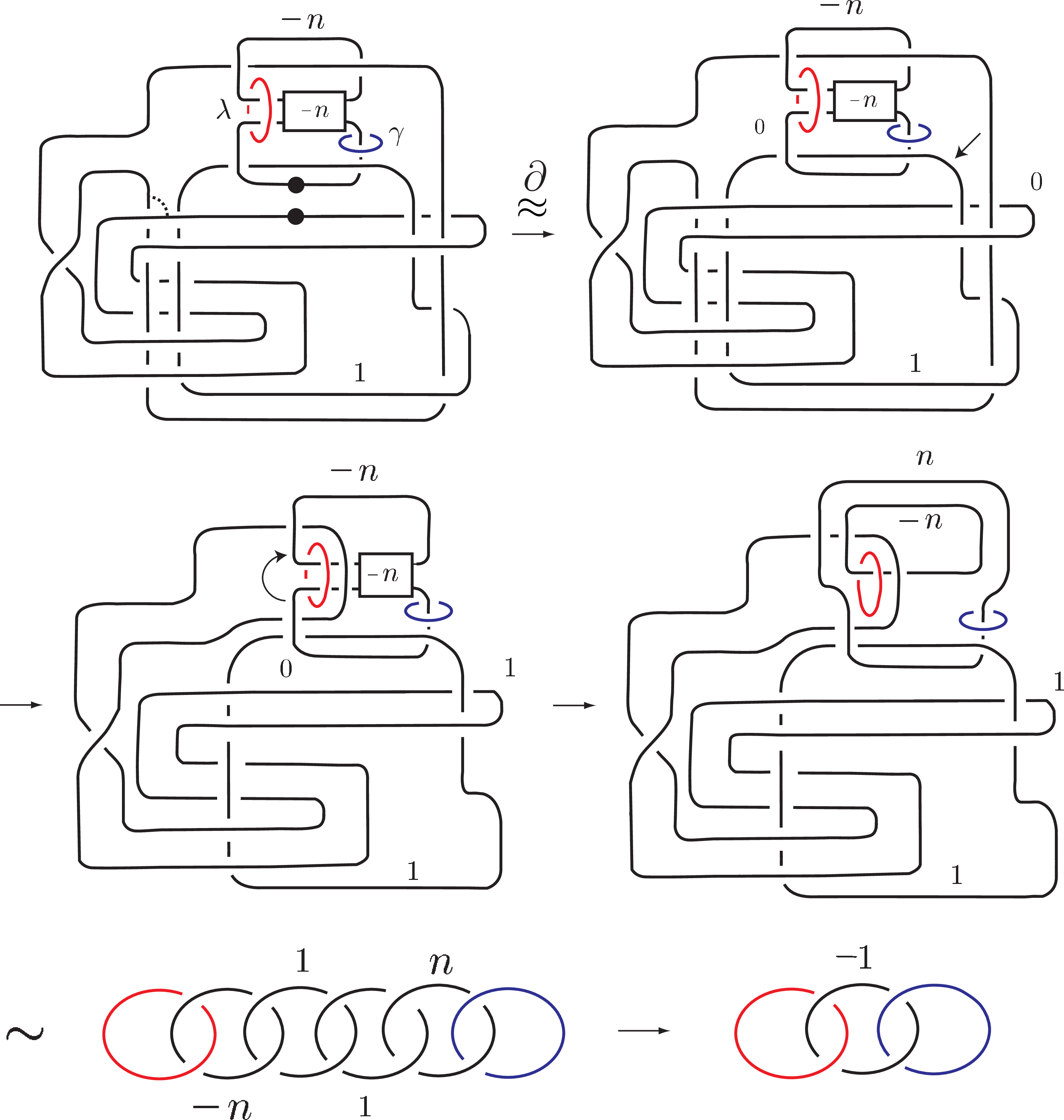}
\caption{A specific diffeomorphism identifying $\partial  W_n$ with $S^3$
which tells us that two curves  $\gamma, \lambda \subset \partial  W_n$ are isotopic.}
\label{deformation2}
\end{center}
\end{figure}

Furthermore,
if we regard $\gamma$ (or $\lambda$) as a $-1$-framed knot,
then it is isotopic to the $0$-framed unknot in  $S^3$.
Now we insert  a canceling  2/3-handle pair to  $W_n$.
Then $W_n$ is diffeomorphic to 
the first picture in  Figure~\ref{deformation3}.
By a handle slide,
we obtain the second picture,
which is diffeomorphic to  $W_{n-1}$.
\begin{figure}[htpb]
\begin{center}
\includegraphics[width=1.0\textwidth]{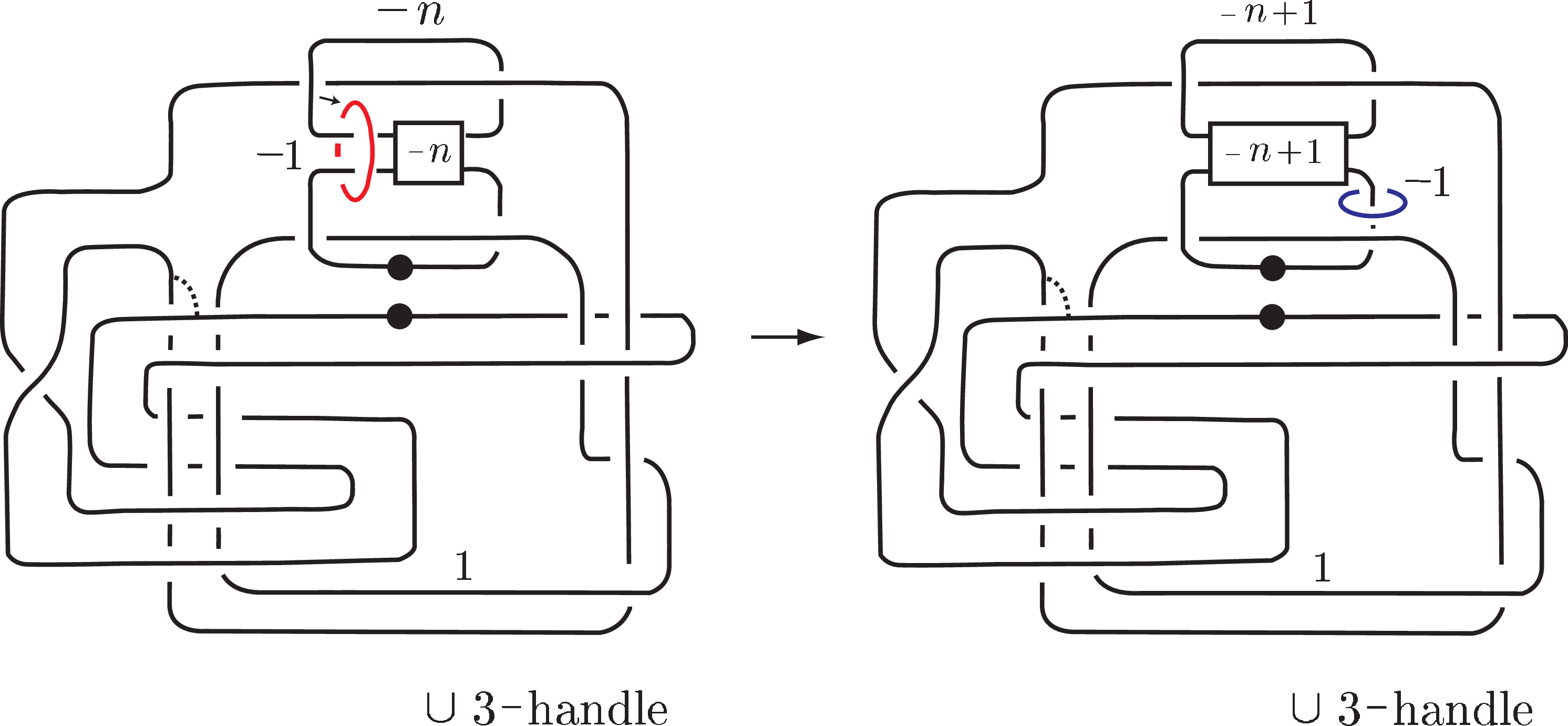}
\caption{A handle slide.}
\label{deformation3}
\end{center}
\end{figure}
\end{proof}

\noindent
By Claim $2$, 
$W_n \approx W_{n-1}  \approx \cdots \approx W_{1} \approx W_{0}$.
By the construction,
$W_{0} \approx  B^4$.
Therefore $W_{n} \approx  B^4$
and $\mathcal{K}_n$ is a slice knot.

Next we consider the case $\mathcal{K}=8_{20}$ with 
the annulus presentation 
as the right side of Figure \ref{fig:Def-BP} and $n < 0$.
By Lemma \ref{h_slice2},
$\mathcal{K}_n$ bounds a smoothly embedded disk in the homotopy 4-ball $W_n$
given by the picture in Figure \ref{W_{-m}1}.
We prove the  following claim.\\

\nbf{Claim 3.} $W_n$ $(n<0)$ also has the handle decomposition given by the picture in Figure~\ref{Wn2}.

\begin{proof}
Inserting a canceling 1/2-handle pair to $W_n$, 
we obtain the first picture in Figure~\ref{deformation1_1}.
Note that, in Figure ~\ref{deformation1_1},
we ignore the dashed arc because it is disjoint from the handle slides below.
By a similar handle calculus to that in Figure \ref{deformation1},
 we obtain the  second picture.
Therefore, 
$W_n$ has the handle decomposition given by the picture in Figure~\ref{Wn2}.
\end{proof}

\begin{figure}[htpb]
\begin{overpic}[width=15cm,clip]{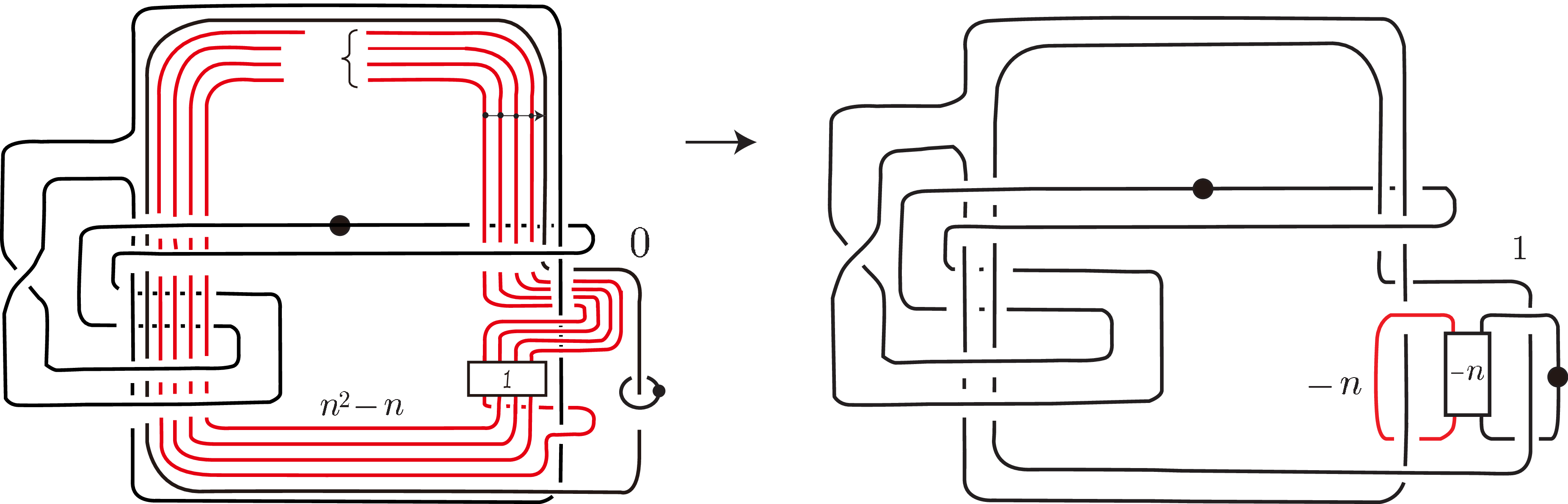}
  \put(80,118){$|n|$}
\end{overpic}
\caption{Handle decompositions of  $W_n$ $(n<0)$.}
\label{deformation1_1}
\end{figure}

By the same argument as that in Claim 2,
we can prove that 
$W_{n} \approx  B^4$
and $\mathcal{K}_n$ is a slice knot.

Now we consider the general case.
First suppose that $n \ge 0$.
In this case, 
we can also associate a  diffeomorphism
$f_n : M_{K}(0)  \to M_{K_n}(0)$ 
as described  in  Figure \ref{diffeo}.
Let $\mu_n$ be the meridian of $K_n$ in $M_{K_n}(0)$.
Then $f_n^{-1}(\mu_n)$ is 
as in the first picture in Figure \ref{general}
 (after ignoring the framing).
Since $K$ is ribbon,
there exist mutually disjoint bands $B_1, \cdots, B_{m}$ 
such that 
if we surgery along these bands, 
then we obtain the $(m+1)$-component unlink.
Furthermore, (by deforming these bands slightly)
we can assume that $B_i \cap f_n^{-1}(\mu_n) = \emptyset$ for each $i \in \{1, 2, \cdots, m\}$.
Then, as the proof of  Lemma \ref{h_slice},
we see that $K_n$ bounds a smoothly embedded disk in a homotopy $4$-ball $W(K_n)$
which has the handle decomposition as in the second picture in Figure \ref{general}.
Note that
we do not draw dashed arcs in Figure \ref{general}.
It is proved that  $W(K_n)$ also has 
the handle decomposition as in the third  picture in Figure \ref{general} similarly.
Then we can prove that $W(K_n) \approx B^4$ by the same argument.
Therefore $K_n$ is a slice knot.

For the case $n  < 0$,
by a similar argument to that in Claim 3,
$K_n$ bounds a smoothly embedded disk in a homotopy $4$-ball $W(K_n)$
which has the handle decomposition as in the third picture in Figure \ref{general}.
Then we can prove that $W(K_n) \approx B^4$ by the same argument again.
Therefore $K_n$ is a slice knot.

\begin{figure}[htpb]
\begin{center}
\includegraphics[width=1.0\textwidth]{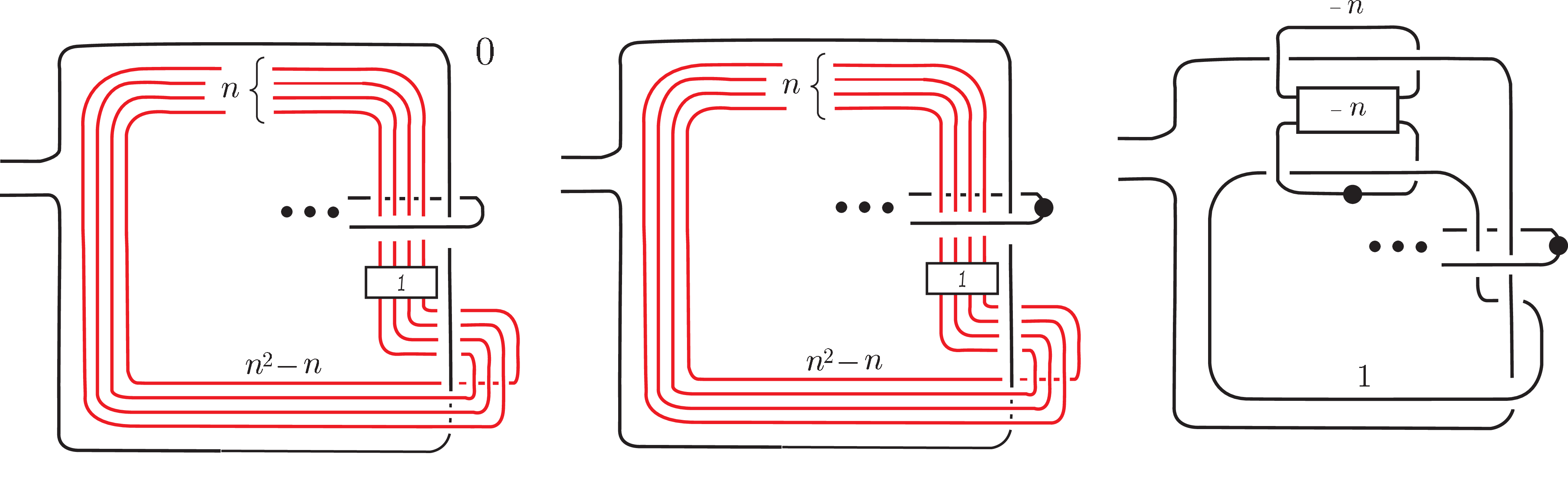}
\caption{}
\label{general}
\end{center}
\end{figure}


\end{proof}

\section{Log transformation and fishtail neighborhood} \label{sec:log} 
In this section, 
we give an alternative proof of Theorem~\ref{thm:main1}
in the case $\mathcal{K}=8_{20}$.
More precisely, we prove that $W_n$ and $W_0$ are related by a log transformation
along a certain torus in $W_n$,
where $W_n$ is the homotopy 4-ball given by the picture in Figure \ref{Wn2}.
Lemma \ref{Gompf} due to Gompf
ensures that $W_n$ and $W_0$ are diffeomorphic,
which implies that  $W_n\approx B^4$.\\

\nbf{Log transformation.} 
Let $X$ be an oriented 4-manifold, 
$T$ an embedded torus with $T \cdot T=0$
and  $\varphi :  T^2 \times \partial D^2 \to \partial {\nu}(T)$ a diffeomorphism,
where $\nu (T) (\approx T^2 \times D^2)$ is a closed neighborhood of $T$ in $X$.
Removing int $\nu(T)$ from $X$ and attaching $T^2 \times D^2$ 
by $\varphi$, 
we obtain 
$$(X-\text{int}\ \nu(T))\cup_\varphi T^2 \times D^2.$$
Suppose that 
$$\varphi _{*} ([\{\text{pt.}\} \times \partial D^2])= p [\{\text{pt.}\} \times \partial D^2]+q[\gamma \times \{\text{pt.}\}]$$
for some essential simple closed curve  $\gamma$ in $T$.
Then we call this surgery a \textit{logarithmic transformation with multiplicity $p$, 
direction $\gamma$ and  auxiliary multiplicity $q$}.
If $p=1$, 
we call this logarithmic transformation a \textit{$q$-fold Dehn twist along $T$ parallel to $\gamma$}.\\

\nbf{Fishtail neighborhood.}
The fishtail neighborhood $F$ is  an elliptic surface which has the handle decomposition in Figure~\ref{F}.
It is well known that the $-1$-framed meridian in Figure~\ref{F} is isotopic to the vanishing cycle of $F$.
In \cite{G2} Gompf proved the following assertion.
\begin{lem}[\cite{G2}]
\label{Gompf}
Let $X$ be a 4-manifold and $T$ be a regular  fiber of a fishtail neighborhood $F$ embedded in $X$.
Then the $q$-fold Dehn twist along $T$ parallel to the vanishing cycle of $F$ does not change 
the diffeomorphism type of $X$.
\end{lem}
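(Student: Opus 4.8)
The statement to prove is Lemma~\ref{Gompf}: for a $4$-manifold $X$ containing a regular fiber $T$ of an embedded fishtail neighborhood $F$, the $k$-fold Dehn twist along $T$ parallel to the vanishing cycle of $F$ does not change the diffeomorphism type of $X$.

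\textbf{Approach.} The plan is to localize the problem to the fishtail neighborhood itself. Since the logarithmic transformation (here a $p=1$, auxiliary multiplicity $k$ surgery) is supported in a tubular neighborhood $\nu(T) \subset F \subset X$, it suffices to exhibit a self-diffeomorphism of $F$ (or of a slightly larger piece $F$ rel $\partial F$) that realizes the $k$-fold Dehn twist along $T$ parallel to the vanishing cycle. Indeed, if $\Psi \colon F \to F$ is a diffeomorphism restricting to the identity on $\partial F$, then cutting $X$ along $\partial F$ and regluing by $\Psi$ gives back $X$; and if one arranges that regluing by $\Psi$ is isotopic to the regluing that implements the log transform, we are done. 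So the whole question becomes: does the $k$-fold Dehn twist along $T$ parallel to the vanishing cycle extend over $F$?

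\textbf{Key steps.} First I would recall from Figure~\ref{F} the handle picture of $F$: the $0$-framed fiber with a $-1$-framed meridian, and the fact (stated in the excerpt) that this $-1$-framed meridian is isotopic to the vanishing cycle $\delta$ of $F$. Second, I would use the monodromy description of the fishtail: $F$ is the elliptic fibration over $D^2$ with one nodal fiber, and its boundary monodromy on a regular fiber $T=T^2$ is a single right-handed Dehn twist $\tau_\delta$ along the vanishing cycle $\delta$. The key classical fact is that the mapping class $\tau_\delta$ of $T^2$ extends over $F$ --- concretely, the self-diffeomorphism of $F$ that "rotates the base $D^2$ once" induces $\tau_\delta^{\pm1}$ on each nearby regular fiber and the identity far away; equivalently, pushing $T$ around a loop in the base encircling the nodal point drags it by the monodromy. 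Third, I would observe that performing a $k$-fold Dehn twist along $T$ parallel to $\delta$ (in the sense of the log-transformation definition given just above, with $p=1$) is exactly the operation of cutting $\nu(T)$ out and regluing by $\tau_\delta^k$; and since $\tau_\delta$ extends over $F$ by the base-rotation diffeomorphism, its $k$-th power does too. Composing $k$ copies of the base-rotation map gives a diffeomorphism $\Psi_k \colon F \to F$, identity near $\partial F$, whose effect on $X$ (via cut-and-reglue along a $3$-torus) is precisely the advertised log transformation, proving $X$ is unchanged.

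\textbf{Main obstacle.} The delicate point is matching the two regluings precisely: one must check that the curve along which the log transform is performed (the $\{\mathrm{pt}\} \times \partial D^2$ direction shifted by $k[\delta \times \{\mathrm{pt}\}]$, in the notation of the definition above) genuinely corresponds to the mapping class $\tau_\delta^k$ of the fiber under the chosen product structure on $\partial \nu(T) = T^2 \times \partial D^2$, and that the ambient framing data ($T\cdot T = 0$) is respected so that no extra twisting is introduced. One must also be careful that the vanishing cycle $\delta$, and not some other curve, is the one appearing as the $-1$-framed meridian, and that "parallel to the vanishing cycle" in the hypothesis is the same $\delta$; this is exactly the content of the well-known fact flagged in the excerpt that the $-1$-framed meridian in Figure~\ref{F} is isotopic to the vanishing cycle. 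Once these identifications are pinned down, the extension $\Psi_k$ of $\tau_\delta^k$ over $F$ does the job, and the proof (following Gompf~\cite{G2}) is complete.
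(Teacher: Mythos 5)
The paper does not prove Lemma~\ref{Gompf}: it is quoted from Gompf~\cite{G2}, so there is no internal proof to compare against. Judged on its own terms, your sketch has a genuine gap at exactly the point you flag as the ``main obstacle,'' and it is not a bookkeeping issue but a false identification. You equate the $k$-fold Dehn twist along $T$ parallel to the vanishing cycle $\delta$ with ``cutting out $\nu(T)$ and regluing by $\tau_\delta^k$,'' and then extend $\tau_\delta^k$ over $F$ via the monodromy (base-rotation) diffeomorphism. But by the definition in Section~\ref{sec:log}, the regluing map $\varphi_k$ of this operation sends the meridian class $\mu=[\{\mathrm{pt}\}\times\partial D^2]$ to $\mu+k[\delta]$ while fixing $H_1(T)$; it is a shear mixing the meridian into the fiber direction. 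The fiberwise map $\tau_\delta^k\times\mathrm{id}$ is a different automorphism of $H_1(T^2\times\partial D^2)$: it fixes $\mu$ and sends the class $[\delta']$ dual to $\delta$ in $T$ to $[\delta']+k[\delta]$. Since $\tau_\delta^k\times\mathrm{id}$ fixes $\mu$, it extends over $\nu(T)=T^2\times D^2$ itself, so regluing by it changes nothing for \emph{any} torus with trivial normal bundle --- no fishtail required. Your argument therefore establishes a vacuous statement. Correspondingly, the lift of the base rotation restricts to $\partial\nu(T)$ as a map isotopic to $\tau_\delta\times\mathrm{id}$ (it covers the identity of $\partial D^2$ and acts on each fiber by the parallel transport around the circle through that point), not as $\varphi_1$; so the monodromy extension does not supply the extension of $\varphi_k$ over the complement that the lemma actually requires.

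What must be shown is that the shear $\varphi_k$ extends over $F-\mathrm{int}\,\nu(T)$ rel $\partial F$ (it certainly does not extend over $\nu(T)$, since it does not preserve the kernel of $H_1(\partial\nu(T))\to H_1(\nu(T))$), and the mechanism for this is the vanishing \emph{disk} rather than the boundary monodromy per se: a push-off $\gamma'$ of $\delta$ into $\partial\nu(T)$ bounds a properly embedded disk in $F-\mathrm{int}\,\nu(T)$ of relative self-intersection $-1$, namely the core of the $-1$-framed $2$-handle in Figure~\ref{F}. The multiplicity-one log transform with auxiliary multiplicity $k$ in direction $\delta$ amounts to a $k$-fold twist along $\gamma'$ (see Section 8.3 of \cite{GS}, or \cite{AY}), and that twist is undone by $k$ handle slides over the $-1$-framed handle --- precisely the ``$1$-untwisting along $\gamma$'' the paper itself invokes in Figure~\ref{logtrans}. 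Any repair of your monodromy-flavored argument must at some point use the $-1$ framing of the vanishing disk; the bare fact that $\tau_\delta$ extends over $F$ is not enough.
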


\begin{figure}[htbp]
\begin{center}
\includegraphics[width=.25\textwidth]{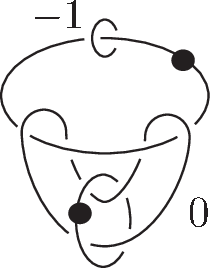}
\caption{A handle decomposition of  $F$.}
\label{F}
\end{center}
\end{figure}

We prove the following.
\begin{lem} \label{lem:handle_decomposition}
The homotopy 4-ball $W_n$ also has the handle decomposition given by the first picture in  Figure \ref{fishtail1}.
\end{lem}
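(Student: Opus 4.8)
The goal is to produce a handlebody picture of $W_n$ exhibiting an embedded fishtail neighborhood $F$ so that the two dotted-circle/2-handle picture of Figure~\ref{Wn2} is recovered after a $k$-fold Dehn twist along a regular fiber; for this we only need to identify a concrete handle decomposition (the first picture in Figure~\ref{fishtail1}) and verify it is diffeomorphic to the one in Figure~\ref{Wn2}. The starting point is Claim~1 from the proof of Theorem~\ref{thm:main1}: $W_n$ has the handle decomposition of Figure~\ref{Wn2}, consisting of two $1$-handles (dotted circles) and two $2$-handles, one of which carries the framing depending on $n$ (recording the annulus-twist data $n^2-n$, now transported through the handle slides of Figure~\ref{deformation1}). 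I would take that picture as given and manipulate it by standard Kirby moves.

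First I would convert one of the dotted circles to a $0$-framed $2$-handle where convenient, or keep it as a $1$-handle, whichever makes the fishtail pattern visible; the essential feature to isolate is a $2$-handle attached along a curve that runs geometrically once over a $1$-handle together with a $-1$-framed (or appropriately framed) meridional circle linking it, which is exactly the local model of $F$ in Figure~\ref{F}. Concretely I expect to perform a sequence of $2$-handle slides that collects the $n$-dependence onto a single small unknotted component (a meridian of the vanishing cycle), separating the ``fishtail part'' carrying the framing $n$ (or $n^2-n$, or $\pm n$ after the slides) from the rest of the diagram, which should then visibly be a handle picture independent of $n$. This is the same bookkeeping already carried out in Figures~\ref{deformation1}–\ref{deformation3}, just reorganized so that the repeated move $W_n\rightsquigarrow W_{n-1}$ is reinterpreted as adding one Dehn twist along the fishtail fiber rather than as inserting a canceling $2/3$-pair.

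The main obstacle is the bookkeeping of framings and linking numbers through the slides: one must check that the curve along which the extra twisting is performed is genuinely a regular fiber of an embedded fishtail neighborhood (i.e.\ that the companion $2$-handle is attached along the vanishing cycle with framing $-1$, so that Lemma~\ref{Gompf} applies with the torus $T$ being that fiber), and that the ``direction'' of the log transformation is parallel to the vanishing cycle. I would verify the fishtail identification by isotoping the relevant sub-picture to literally match Figure~\ref{F} (a $-1$-framed unknot with a $0$-framed meridian, up to the standard handle presentation of $F$), and track the parallel pushoff of that fiber to confirm the twisting is along the vanishing-cycle direction. Once the first picture in Figure~\ref{fishtail1} is drawn and this local identification is in place, the equivalence with Figure~\ref{Wn2} is a short explicit Kirby-calculus check, and the lemma follows; the subsequent use of Lemma~\ref{Gompf} to conclude $W_n\approx W_0\approx B^4$ is then immediate and will be carried out after this lemma.
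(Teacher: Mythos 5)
There is a genuine gap here. The first picture in Figure~\ref{fishtail1} is not obtained from Figure~\ref{Wn2} by handle slides: it contains \emph{more} handles than Figure~\ref{Wn2}, namely two additional $2$-handles (one attached along $\gamma$ with framing $-1$, one attached along a curve $\mu$ with framing $0$) together with the corresponding $3$-handles. Handle slides and dot/zero trades cannot create these; the only way to introduce them is to insert canceling $2/3$-handle pairs, which is precisely the mechanism your plan explicitly sets aside (``\dots rather than as inserting a canceling $2/3$-pair''). The actual content of the lemma is the justification that these two pairs \emph{can} be inserted: first one fixes the diffeomorphism $\partial W_n \approx S^3$ coming from Figure~\ref{diffeo} and checks that $\gamma$, regarded as a $-1$-framed knot, becomes the $0$-framed unknot in $S^3$, so that attaching a $2$-handle along $\gamma$ with framing $-1$ creates an $S^1\times S^2$ summand in the boundary and hence admits a canceling $3$-handle, giving $W_n \approx W_n+\gamma^{-1}\cup(\text{3-handle})$; then one repeats the argument one level up, fixing a diffeomorphism $\partial(W_n+\gamma^{-1})\approx S^1\times S^2$ and checking that the second curve $\mu$ with framing $0$ is the $0$-framed unknot \emph{there}. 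Your proposal contains neither verification; the second one in particular takes place in $S^1\times S^2$ rather than $S^3$ and does not follow from the computations in Figures~\ref{deformation1}--\ref{deformation3}.

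A secondary point: most of your effort goes into identifying the fishtail neighborhood, checking that its companion $2$-handle is the $-1$-framed vanishing cycle, and confirming that the Dehn twist is in the vanishing-cycle direction. That is the content of the \emph{next} step (the proof of Theorem~\ref{thm:main1} for $K=8_{20}$ via Figure~\ref{embedding} and Lemma~\ref{Gompf}), not of this lemma, which only asserts the existence of the enlarged handle decomposition. Reorganizing the $n$-dependence by $2$-handle slides, as you suggest, would at best reproduce the argument of Claim~2 in Section~3; it does not produce the decomposition of Figure~\ref{fishtail1}.
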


\begin{figure}[htpb]
\begin{center}
\includegraphics[width=1.0\textwidth]{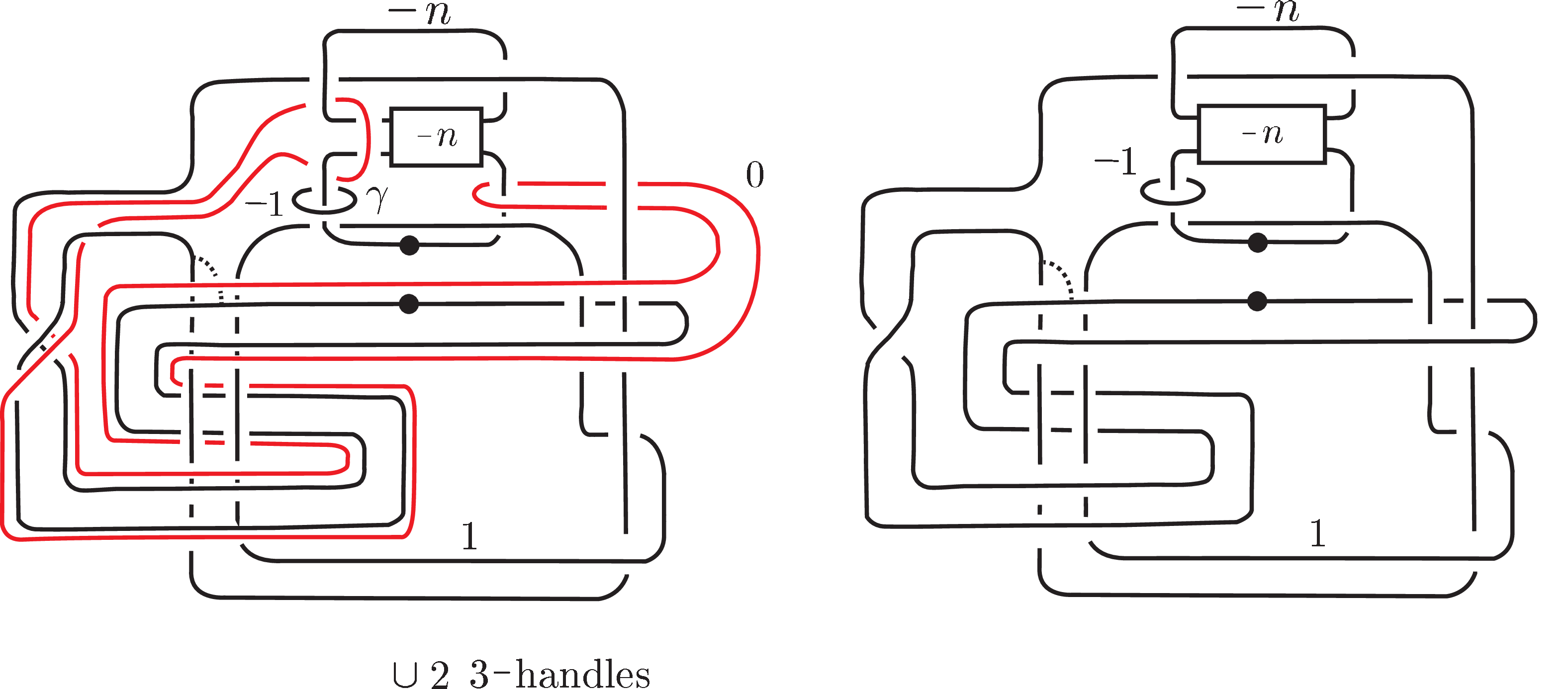}
\caption{A handle decomposition of $W_n$ and the handlebody picture of $W_n + \gamma^{-1}$.}
\label{fishtail1}
\end{center}
\end{figure}

\begin{proof} 
We fix a  diffeomorphism identifying $\partial  W_n$
with $S^3$.
We use the diffeomorphism
described in Figure  
\ref{deformation2} again.
Recall that this diffeomorphism tells us that  
the $-1$-framed  $\gamma$  is isotopic to  the 0-framed unknot in  $S^3$
(for the detail, see the proof of Theorem \ref{thm:main1}).
Therefore, by inserting a canceling  2/3-handle pair to $W_n$, 
we obtain 
\[ W_n   \approx W_n + \gamma^{-1} \cup \text{(3-handle)}, \]
where $W_n + \gamma^{-1}$ is the handlebody given by 
the second  picture  in Figure~\ref{fishtail1}.

Next we fix a diffeomorphism identifying $\partial (W_n + \gamma^{-1})$
with $S^1 \times S^2$ described in Figure \ref{fishtail2}
(for a while, we ignore the curve $\mu$).
\begin{figure}[htpb]
\begin{center}
\includegraphics[width=1.0\textwidth]{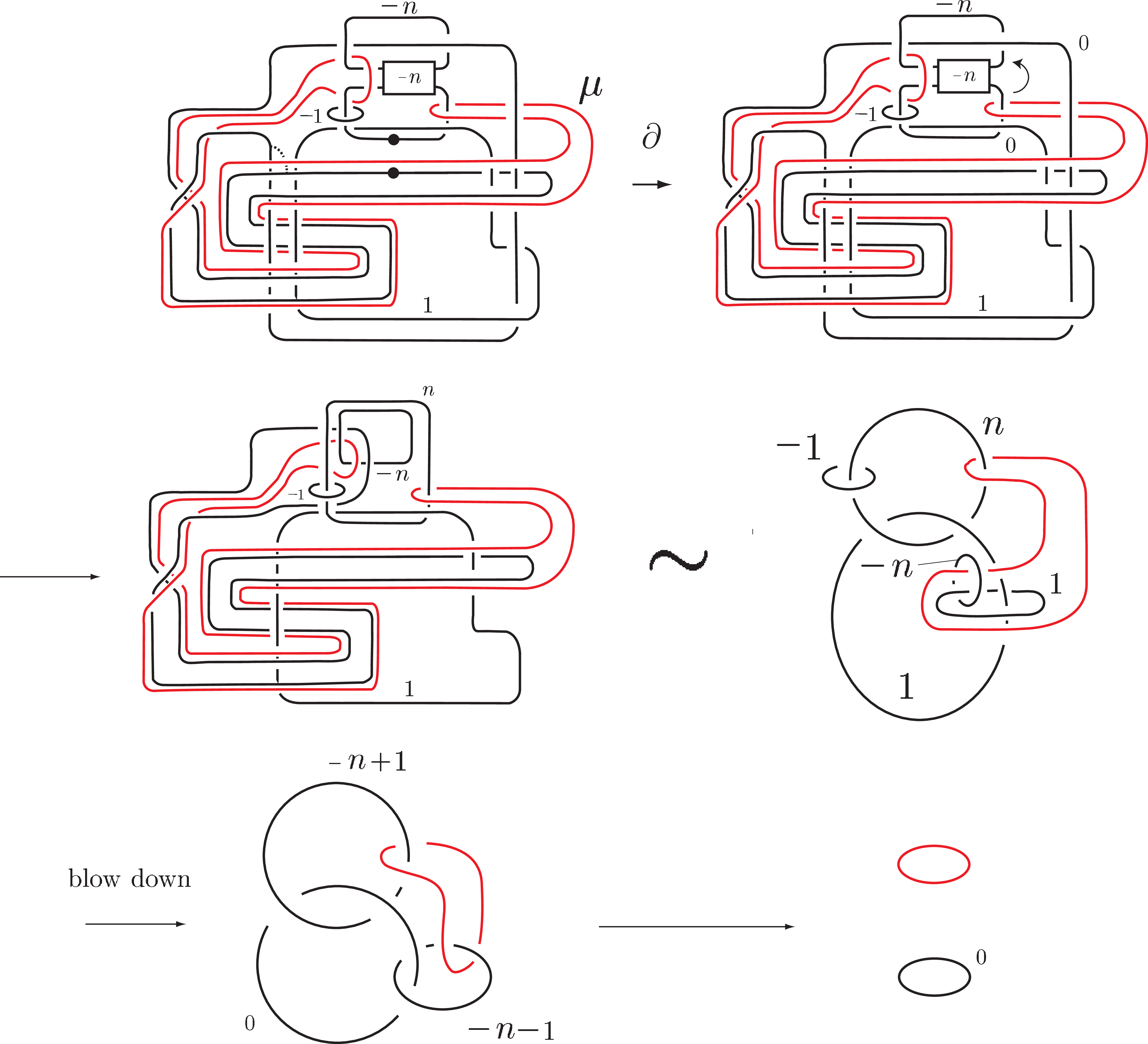}
\caption{A  diffeomorphism identifying $\partial  (W_n + \gamma^{-1})$
with $S^1 \times S^2$ which tells us that 
the curve $\mu$ is the  unknot in $S^1 \times S^2$.}
\label{fishtail2}
\end{center}
\end{figure}
This diffeomorphism tells us that $\mu \subset \partial  (W_n + \gamma^{-1})$
is the  unknot  in  $S^1 \times S^2$.
Furthermore, if we regard $\mu$ as a $0$-framed knot,
then it is isotopic to the 0-framed unknot in  $S^1 \times S^2$.
Therefore, by inserting a canceling  2/3-handle pair to $W_n$,  
we obtain the first picture in Figure \ref{fishtail1}. 
\end{proof}

Now we prove the main result in this section.

\begin{proof} [Proof of Theorem~\ref{thm:main1} in the case $\mathcal{K}=8_{20}$] 
The second picture of Figure~\ref{embedding} is a sub-handlebody of $W_n$.
By isotopy,
we see that  it  is diffeomorphic to $F\cup \text{(1-handle)}$,
where $F$ is  the fishtail neighborhood.
Therefore, by removing the 1-handle,
we can find $F$ as a submanifold of $W_n$.

\begin{figure}[htpb]
\begin{center}
\includegraphics[width=1.0\textwidth]{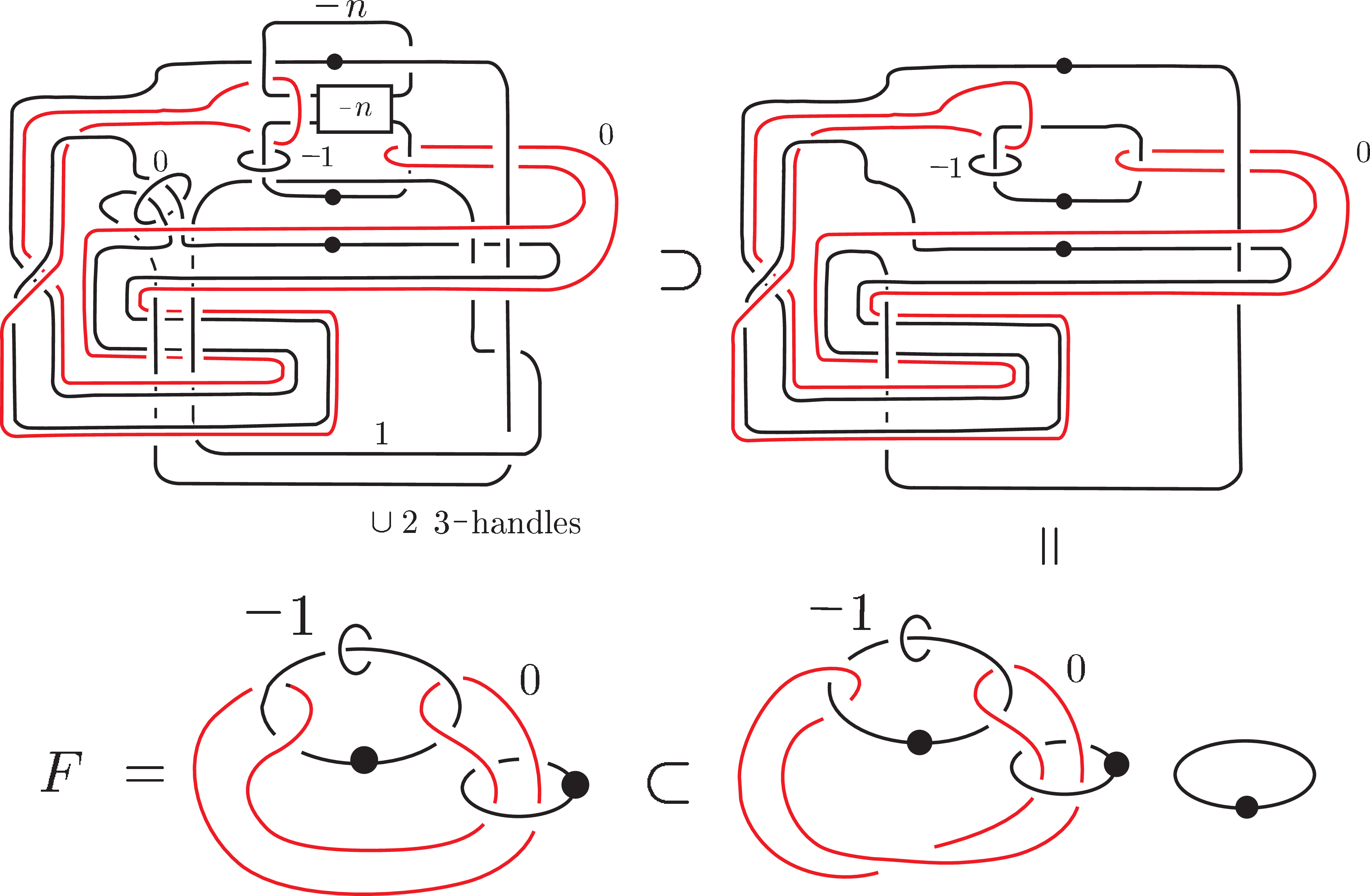}
\caption{An embedding of  the fishtail neighborhood $F$.}
\label{embedding}
\end{center}
\end{figure}

Let $T$ be a regular fiber of $F$  embedded in $W_n$.
The 1-fold Dehn twist along $T$ parallel to $\gamma$ is 1-untwisting along $\gamma$.
For the detail, see \cite{AY} or \cite{GS}.
Thus the local deformation is as in Figure~\ref{logtrans}.
As a result,
performing the $n$-fold Dehn twist along $T$ parallel to $\gamma$
and removing the canceling 2/3-handle pairs,
we obtain $W_0$ which is diffeomorphic to $B^4$.  
By Lemma \ref{Gompf},  $W_n \approx W_0$.
Therefore $\mathcal{K}_n$ (obtained from $8_{20}$) is a slice knot.
\end{proof}

\begin{figure}[htpb]
\begin{center}
\includegraphics[width=1.0\textwidth]{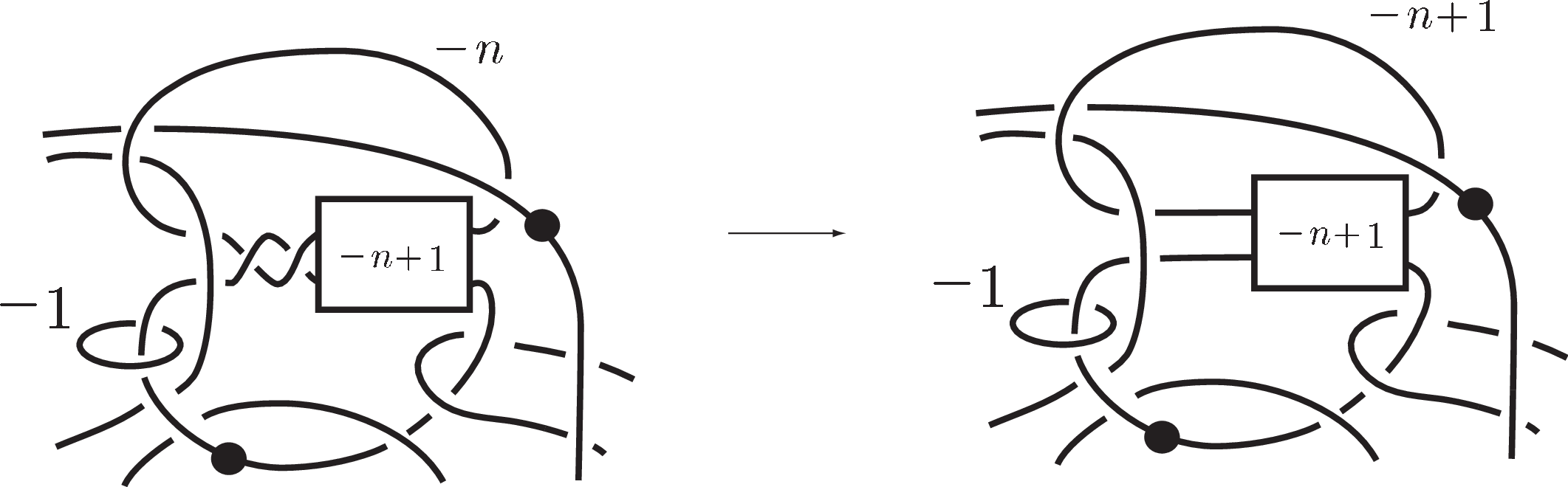}
\caption{The $1$-fold Dehn twist along $T$ parallel to $\gamma$.}
\label{logtrans}
\end{center}
\end{figure}

\section{A sufficient condition to be ribbon}\label{sec:ribbon} 

In this section, 
we give a sufficient condition for a slice knot
to be ribbon (Lemma \ref{lem:ribbon})
and prove that all the knots obtained from $8_{20}$ by annulus twists are  ribbon (Theorem \ref{thm:ribbon}).

\begin{lem}\label{lem:ribbon}
Let HD be a handle diagram of $B^4$.
Suppose that HD is changed into the empty  handle diagram of $B^4$
by handle slides, adding or canceling 1/2-handle pairs, and isotopies.
Then the belt sphere of  any 2-handle of $HD$ is a ribbon knot.
\end{lem}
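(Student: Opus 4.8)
The plan is to trace, at the level of the 4-manifold with its embedded 2-handle cocore, what happens to the belt sphere of a fixed 2-handle as we run the given sequence of moves that trivializes $HD$. Fix one 2-handle $h$ of $HD$ and let $\beta\subset \partial(\text{4-skeleton})$ be its belt sphere (the boundary of its cocore disk). Since $HD$ is a handle diagram of $B^4$, the cocore disk $D_\beta$ of $h$ is a smoothly, properly embedded disk in $B^4$ with $\partial D_\beta=\beta$ sitting in $S^3=\partial B^4$; so $\beta$ is automatically a slice knot. The point is to upgrade "slice" to "ribbon" by observing that the particular slice disk $D_\beta$ coming from a cocore of a handle in a diagram that simplifies to the empty diagram is in fact a ribbon disk.

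The key step is a normal-form statement for slice disks. First I would recall that a properly embedded disk in $B^4$ is ribbon precisely when it admits a handle decomposition (relative to its boundary) using only 0-handles and 1-handles; equivalently, viewing $B^4$ built from $S^3\times[0,1]$ together with handles, the radial height function on $B^4$ restricted to $D_\beta$ has no local maxima (see the dotted-circle / ribbon-disk discussion in subsection 1.4 of \cite{A} and subsection 6.2 of \cite{GS}). So it suffices to show that the sequence of moves trivializing $HD$ exhibits $D_\beta$ in such a maximum-free form. The idea is that each allowed move — a handle slide, or the addition/cancellation of a 1/2-handle pair — can be performed so as to not create a local maximum of the cocore disk $D_\beta$: a handle slide of $h$ (or over $h$, or of any other handle) is an ambient isotopy of the attaching/belt data and does not change the Morse-theoretic picture of $D_\beta$; adding a canceling 1/2-handle pair introduces only a new 0-handle and a new 1-handle of $B^4$, hence at worst a new 0-handle and 1-handle of the cocore disk; and canceling such a pair only removes handles. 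Since the terminal diagram is empty, in the end $h$ itself has been canceled against a 1-handle, and unwinding the whole sequence presents $D_\beta$ with no 2-handles of its own, i.e.\ as a ribbon disk. Concretely, I would set up an induction on the length of the move sequence, carrying the inductive hypothesis "the cocore disk of the chosen 2-handle is currently in ribbon position (built from 0- and 1-handles only)" through each of the three move types.

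I expect the main obstacle to be bookkeeping the handle $h$ itself through the very last cancellation: at the moment $h$ is canceled against a 1-handle, one must check that the cocore disk $D_\beta$ — whose boundary is the belt sphere we care about — really does inherit a $0$-/$1$-handle-only decomposition rather than acquiring a $2$-handle, and that the canceling 1-handle meets $D_\beta$ in a way compatible with ribbon position (a single arc). This is the place where "the belt sphere of \emph{this} 2-handle" must be distinguished from the attaching region, and where the orientation/framing conventions for dotted-circle notation enter; I would handle it by passing to the complement and phrasing the statement dually (a ribbon disk complement is built from $B^4$ with 1- and 2-handles, matching exactly the dotted-circle handlebodies that appear), so that "the final diagram is empty" translates directly into "the disk complement needs no 3-handles," which is the ribbon condition. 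Everything else is a routine propagation of the normal form through isotopies and handle additions/cancellations.
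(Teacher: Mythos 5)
There is a genuine gap, and it sits exactly where you flagged ``the main obstacle.'' Your closing reduction asserts that ribbonness of the cocore disk $D_\beta$ is equivalent to its complement admitting a handle decomposition with no $3$-handles. Only one direction of that equivalence is known: a ribbon disk complement needs no $3$-handles, but the converse is open. Worse, for the disk you are using the ``no $3$-handles'' condition is automatic and carries no information: $B^4\setminus\nu(D_\beta)$ is just the sub-handlebody of $HD$ obtained by deleting the $2$-handle $h$, so it has only $0$-, $1$- and $2$-handles for \emph{every} handle decomposition of $B^4$ without $3$-handles, whether or not $HD$ can be trivialized by the allowed moves. If your criterion were valid, the lemma would hold with its hypothesis deleted --- i.e.\ the belt sphere of any $2$-handle in any $1$-/$2$-handle decomposition of $B^4$ would be ribbon --- which is precisely the ``converse'' that Remark 5.3 calls nontrivial and whose handle-theoretic shadow (Question 3) the authors expect to fail for Andrews--Curtis reasons. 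The same problem infects the Morse-theoretic tracking in your middle step: handle slides and $1/2$-pair additions are moves on the ambient handle decomposition of $B^4$, not on $D_\beta$, and they do not induce a handle decomposition of the cocore disk that you can propagate inductively; moreover a canceling $1/2$-pair consists of a $1$-handle and a $2$-handle of $B^4$, not ``a new $0$-handle and a new $1$-handle,'' and ribbonness requires controlling maxima with respect to the \emph{standard} radial function on $B^4$, not the Morse function adapted to $HD$.

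The paper's proof is three-dimensional and uses the hypothesis in an essential, concrete way. After rearranging the sequence so that all $1/2$-pair creations come first, then all handle slides, then all $1/2$-pair cancellations, the belt sphere $\beta$ is carried to an unknot $\beta_l$ in the intermediate diagram $HD_l$; it bounds an embedded disk $D$ in $S^3$ missing the dotted circles and meeting the attaching circles of the $2$-handles in $m$ points. Band surgeries along $m-1$ bands in $D$ convert $\beta_l$ into the $m$ meridians of those attaching circles, and the final stage --- in which every $2$-handle is canceled against a $1$-handle --- sends these meridians to an $m$-component unlink in the standard $S^3$. A knot that becomes an $m$-component unlink after $m-1$ band moves is ribbon, and that classical characterization, not a statement about the disk complement, is what closes the argument. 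The hypothesis that no $2/3$-pairs are needed is what guarantees the meridians end up as an unlink; your proposal never engages with this mechanism.
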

\begin{proof}
Let 
$$HD=HD_0\to HD_1\to \cdots \to HD_n=\text{(empty handle diagram) }$$
be a  sequence of handle diagrams 
satisfying the condition of Lemma~\ref{lem:ribbon}.
By rearranging the sequence, 
we can assume  the following.
$$HD_0\to HD_1\to \cdots \to HD_k\ \ (\text{adding canceling 1/2-handle pairs}),$$
$$HD_k\to HD_{k+1}\to \cdots \to HD_l\ \ (\text{handle slides}),$$
$$HD_l\to HD_{l+1}\to \cdots \to HD_n\ \ (\text{annihilating canceling 1/2-handle pairs}).$$

Let  $\beta$ be the belt sphere of any 2-handle of $HD$.
Then it is the unknot in $HD$
and we denote by $\beta_i$ $(i=1, 2, \cdots ,l)$ the corresponding knot in  $HD_i$.
We see that  $\beta_l$  is also the unknot in $HD_{l}$.
Furthermore, 
we can find a smoothly embedded disk $D$ in  $HD_l$ such that $\partial D=\beta_l$,
the disk $D$ does not intersect any dotted 1-handles\footnote{
We can choose $D$ in this way since the link which  consists of dotted circles (representing 1-handles) and $\beta_{l}$ is the unlink.}, and
$D$  intersects transversely with some attaching spheres of 2-handles 
as the left in Figure~\ref{bunkatsu}.
\begin{figure}[htpb]
\begin{center}
\includegraphics[width=1.0\textwidth]{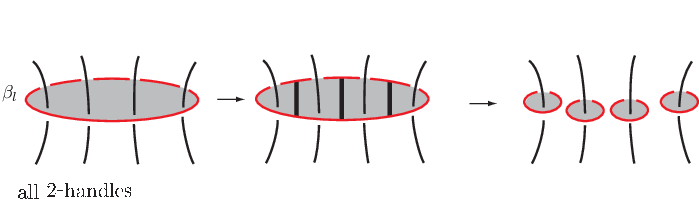}
\caption{Band surgeries along mutually disjoint bands ($m=4$).}
\label{bunkatsu}
\end{center}
\end{figure}
Let $m$ be the number of  intersections between  $D$ and the attaching spheres of 2-handles of  $HD_l$.
By band surgeries along mutually disjoint bands $B_1$, $B_2$, $\cdots$, $B_{m-1}$ 
as the middle picture in Figure~\ref{bunkatsu},
we obtain an $m$-component link $L$ such that 
each component is the  meridian of the attaching sphere of a 2-handle of $HD_l$.
 

Finally we consider the sequence $HD_l\to \cdots \to HD_n$.
Let $L'$ be the link in $HD_n$ which is  corresponding to $L$.
Then it is the $m$-component unlink in $S^3$.
In other words, the knot $\beta$ is deformed into 
the $m$-component unlink by
band surgeries along $m-1$ bands.
This means that $\beta$ is a ribbon knot.
\end{proof}

Let $8_{20}$ be the knot with 
the  annulus presentation as in the right side of Figure \ref{fig:Def-BP}
and  $\mathcal{K}_n$  $(n \ge 0)$  the knot obtained from $8_{20}$ by the $n$-fold annulus twist.
By Theorem \ref{thm:main1}, 
$\mathcal{K}_n$ is a slice knot. 
There is no apparent reason for $\mathcal{K}_n$ to be ribbon.
Our result is that, indeed, $\mathcal{K}_n$ is a ribbon knot.
To prove this,
we first observe the following.

\begin{lem}\label{Omae}
The slice knot $\mathcal{K}_n$ is located as in  Figure \ref{OmaeKn}.
\end{lem}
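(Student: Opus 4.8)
The plan is to unwind the chain of diffeomorphisms used in the proof of Theorem~\ref{thm:main1} and track where the knot $K_n$ sits, presenting it concretely as the belt sphere (equivalently the boundary of a cocore) of a distinguished $2$-handle in a handle diagram of $W_n\approx B^4$. Recall from Lemma~\ref{h_slice} that $K_n$ bounds the cocore of the $2$-handle attached along $\mu_n$, and that $W_n$ has the handle decomposition of Figure~\ref{Wn1}, where the $2$-handle in question is the $(n^2-n)$-framed one coming from $(f_n\circ g)^{-1}(\mu_n)$. So the first step is simply to observe that $K_n$ is the belt sphere of that $2$-handle. The substance of Lemma~\ref{Omae} is then to redraw this picture, via the same sequence of handle moves appearing in Claim~1 and its follow-ups (Figures~\ref{deformation1}--\ref{deformation3}), carrying the belt-sphere curve $K_n$ along, and to recognize that the resulting position of $K_n$ is exactly the one drawn in Figure~\ref{OmaeKn}, which reproduces Omae's picture from~\cite{Om}.

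Concretely, I would start from Figure~\ref{Wn1}, insert the canceling $1/2$-handle pairs and perform the handle slides that produce Figure~\ref{Wn2}, all the while drawing $K_n$ as the meridian/belt circle of the relevant $2$-handle; since those slides are chosen to be disjoint from (or only mildly interact with) that $2$-handle and its cocore, the belt sphere is transported in a controlled way. Then, using the specific diffeomorphism $\partial W_n\approx S^3$ of Figure~\ref{deformation2} (replacing dotted circles by $0$-framed unknots and doing the indicated handle calculus), I would push the curve $K_n$ through to an honest knot in $S^3$ and check that it matches Figure~\ref{OmaeKn}. The point of stating the lemma in this form is to set up the hypotheses of Lemma~\ref{lem:ribbon}: once we know $K_n$ is the belt sphere of a $2$-handle in a handle diagram of $B^4$ that dies under handle slides and $1/2$-pair moves, ribbonness is immediate — but for that we need the explicit diagram, which Figure~\ref{OmaeKn} provides.

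The main obstacle I anticipate is purely diagrammatic bookkeeping: keeping the framings correct (the $n^2-n$ framing, the $\pm1$ framings on the auxiliary unknots) and making sure the belt-sphere curve is tracked faithfully through each of the roughly half-dozen handle moves, especially the $1$-handle slide and the step where a canceling pair is annihilated. There is no deep topological input here beyond what is already in Lemmas~\ref{AJOT} and~\ref{h_slice} and the handle calculus of Section~3; the risk is an error in reading off the final diagram. To mitigate this I would cross-check the outcome against Omae's original description in~\cite{Om} of $K_n$ — since the lemma asserts precisely that $K_n$ ``is located as in Figure~\ref{OmaeKn},'' agreement with the known picture of these knots is the natural sanity check, and the whole argument is essentially a verification that our handle-theoretic construction reproduces Omae's family on the nose.
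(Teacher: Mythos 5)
Your proposal matches the paper's argument: the paper's entire proof of this lemma is the one-line observation that it follows immediately from the proofs of Lemma~\ref{h_slice} and Theorem~\ref{thm:main1}, i.e.\ from tracking $K_n$ as the belt sphere of the distinguished $2$-handle through the handle calculus of Figures~\ref{Wn1}--\ref{deformation1}. Your more detailed description of that bookkeeping is exactly what the paper leaves implicit, so the approach is essentially identical.
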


\begin{proof}
By the proofs of Lemma \ref{h_slice} and Theorem \ref{thm:main1},
we obtain this lemma immediately.
\end{proof}
\begin{figure}[htpb]
\begin{center}
\includegraphics[width=.5\textwidth]{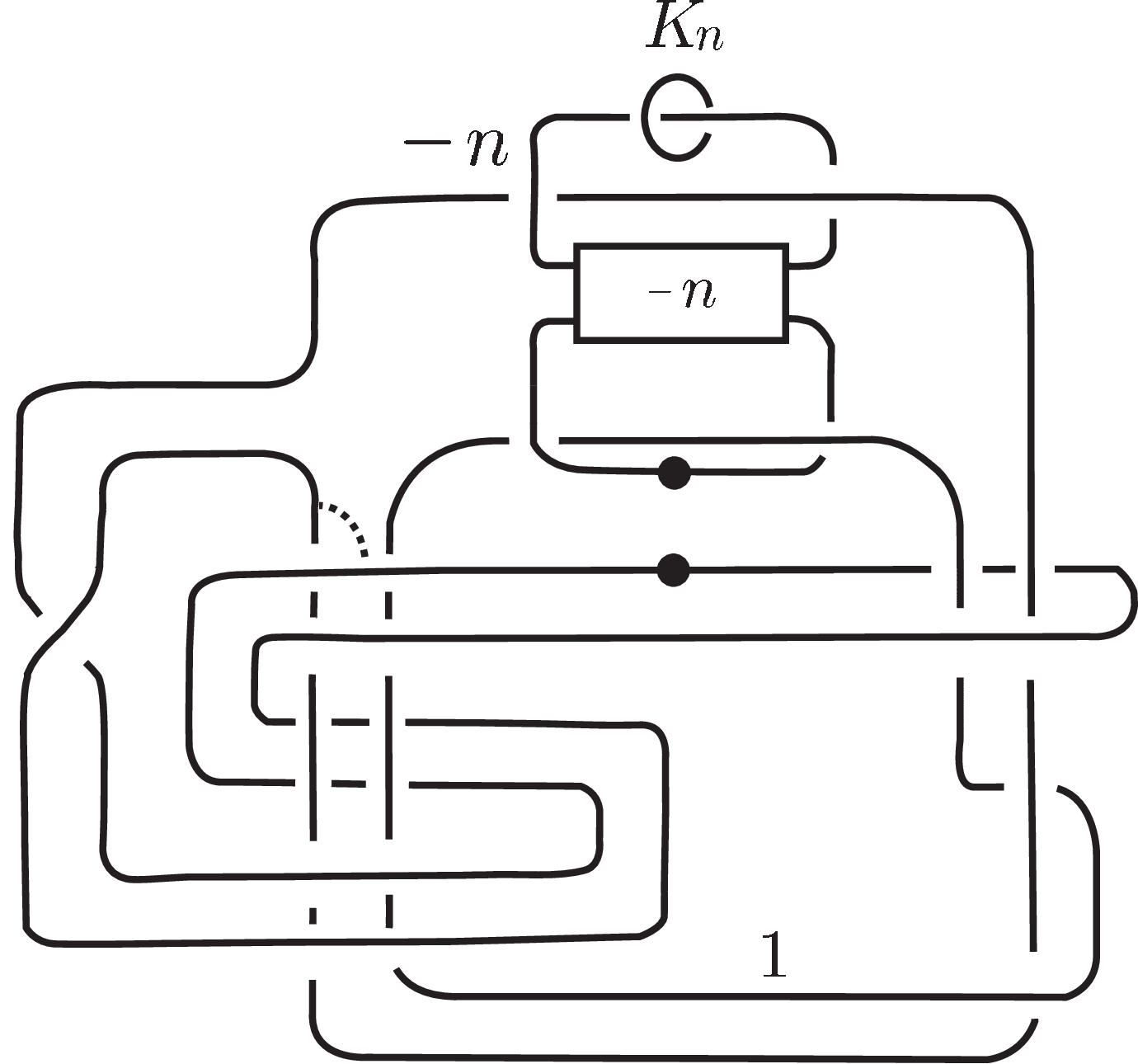}
\caption{The slice knot  $\mathcal{K}_n$ in $\partial W_n$.}
\label{OmaeKn}
\end{center}
\end{figure}

\begin{rmk}
Let $K$ be any ribbon knot  in $\partial B^4$.
Then it is not difficult to see that
 $B^4$ admits  a handle decomposition
\[ h^0 \cup h_1^1  \cup \cdots  \cup h_{n}^1 \cup h_{1}^2 \cup \cdots  \cup h_{n}^2 \]
such that the belt sphere of some 2-handle is isotopic to $K$,
where $h^0$ is a $0$-handle, $h_i^1 (i=1, \cdots, n)$ is a 1-handle and
$h_j^2 (j=1, \cdots, n)$ is a 2-handle.
For the converse, see  Conjecture \ref{conj:2}.
\end{rmk}

Now  we prove  the following:

\begin{thm}\label{thm:ribbon}
The  slice knot $\mathcal{K}_n$ $(n \ge 0)$ is ribbon.
\end{thm}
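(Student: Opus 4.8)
The plan is to apply Lemma~\ref{lem:ribbon} to the homotopy $4$-ball $W_n$, using the fact (from the proof of Theorem~\ref{thm:main1}) that $W_n$ has a handle diagram which can be reduced to the empty diagram of $B^4$ by handle slides and the insertion/cancellation of $1/2$-handle pairs, together with Lemma~\ref{Omae}, which exhibits $K_n$ inside $\partial W_n$. First I would recall from Claim~1 and Claim~2 in the proof of Theorem~\ref{thm:main1} the explicit sequence of moves: one inserts canceling $1/2$-handle pairs and performs handle slides to pass from the diagram in Figure~\ref{Wn1} to the one in Figure~\ref{Wn2}, then inserts a canceling $2/3$-handle pair and slides to get $W_n\approx W_{n-1}$, and finally $W_0\approx B^4$. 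The key point is that Lemma~\ref{lem:ribbon} only allows handle slides and $1/2$-handle pair moves, so I must arrange the reduction of $W_n$ to the empty diagram using \emph{only} those moves, and I must identify $K_n$ as the belt sphere of one of the $2$-handles in that diagram.

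The main steps, in order, are as follows. (1) Start from the handle decomposition of $W_n$ in which $K_n$ appears as the boundary of the cocore of a $2$-handle — this is precisely the content of Lemma~\ref{h_slice} and Lemma~\ref{Omae} (Figure~\ref{OmaeKn}), so the belt sphere of that $2$-handle is exactly $K_n$. (2) Produce a sequence $HD = HD_0 \to HD_1 \to \cdots \to HD_N = \emptyset$ consisting only of handle slides and additions/cancellations of $1/2$-handle pairs, verifying that $W_n \approx B^4$ \emph{by such a sequence}; the reduction in the proof of Theorem~\ref{thm:main1} is essentially of this form once one checks that the $2/3$-handle pair introduced there can be replaced, or that the relevant $2$-handle with the $3$-handle can be dealt with compatibly — the cleanest route is to observe that after the reductions the only surviving handles pair up as $1/2$-handle pairs, so no $2/3$ or $0/1$ cancellations intervene on the part of the diagram carrying $K_n$. (3) Apply Lemma~\ref{lem:ribbon} to conclude that the belt sphere $K_n$ of the distinguished $2$-handle is a ribbon knot.

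The step I expect to be the main obstacle is step (2): making sure that the diffeomorphism $W_n\approx B^4$ established in Section~3 is realized by a sequence of moves of exactly the type permitted in Lemma~\ref{lem:ribbon} (handle slides and $1/2$-pair insertions/cancellations only), and that throughout this sequence the $2$-handle whose belt sphere is $K_n$ is never destroyed prematurely — i.e.\ that $K_n$ really is the $\beta$ of Lemma~\ref{lem:ribbon}. In particular, the canceling $2/3$-handle pair used in Claim~2 is not of the allowed type, so I would either re-derive the reduction avoiding it (using instead the log-transformation description of Section~4, or a direct handle-slide argument reducing $W_n$ to $W_0$ and then $W_0$ to the empty diagram) or argue that the $3$-handle and its canceling $2$-handle can be carried along inertly and removed only at the very end, leaving a legitimate $1/2$-reduction of the rest. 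Once this bookkeeping is done, the conclusion is immediate from Lemma~\ref{lem:ribbon}.
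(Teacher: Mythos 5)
Your overall framework is the same as the paper's: exhibit $K_n$ as the belt sphere of a $2$-handle in a handle diagram of $W_n$ (Lemma~\ref{Omae}), reduce that diagram to the empty diagram using only handle slides and $1/2$-handle pair moves, and invoke Lemma~\ref{lem:ribbon}. You also correctly identify the obstruction: the reduction in Section~3 uses a canceling $2/3$-handle pair (Claim~2), which is not a move permitted by Lemma~\ref{lem:ribbon}. But having located the difficulty, you do not resolve it, and this is where the actual content of the proof lives. The paper's proof consists of a new, explicit handle calculus (Figures~\ref{ribbon1}, \ref{ribbon2}, \ref{ribbon4}, \ref{ribbon3}) that empties the diagram of Figure~\ref{OmaeKn} using only the allowed moves; this is not a repackaging of the Section~3 argument but an independent computation, and it cannot be dismissed as ``bookkeeping.'' Indeed, Question~3 in Section~6 asks precisely whether such $2/3$-free reductions always exist for handle decompositions of $B^4$, and the authors expect the answer to be \emph{no} in general (for Andrews--Curtis-type reasons). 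So the existence of the required sequence of moves is the theorem's essential nontrivial input, and a proof that stops at ``once this bookkeeping is done'' has not proved anything beyond what Theorem~\ref{thm:main1} already gives (sliceness).

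Neither of your two fallback suggestions closes the gap as stated. The $2$-handle introduced in the $2/3$-pair of Claim~2 is not carried along inertly: the handle slide in Figure~\ref{deformation3} slides other handles over it, and that slide is exactly what realizes $W_n\approx W_{n-1}$; so you cannot quarantine the pair and delete it at the end without redoing the argument. The log-transformation route of Section~4 is even further from the permitted moves, since a log transformation is not a handle slide or a $1/2$-pair operation at all. To complete the proof you would need to actually produce the $2/3$-free sequence of moves, which is what the paper does.
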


\begin{proof}
Let $HD$ be the handle diagram  given by 
the picture in Figure \ref{OmaeKn}.
By Lemma \ref{Omae},
$\mathcal{K}_n$ is isotopic to the belt sphere of a 2-handle of $HD$.
By Lemma \ref{lem:ribbon},
if $HD$ is changed into the empty handle diagram 
by handle slides, adding or  canceling 1/2-handle pairs, and isotopies,
then $\mathcal{K}_n$ is a ribbon knot.
Such operations are  realized in Figures~\ref{ribbon1},
~\ref{ribbon2},
~\ref{ribbon4} and ~\ref{ribbon3}.
As a result, $\mathcal{K}_n$ is a ribbon knot.
\begin{figure}
\begin{center}
\includegraphics[width=1.0\textwidth]{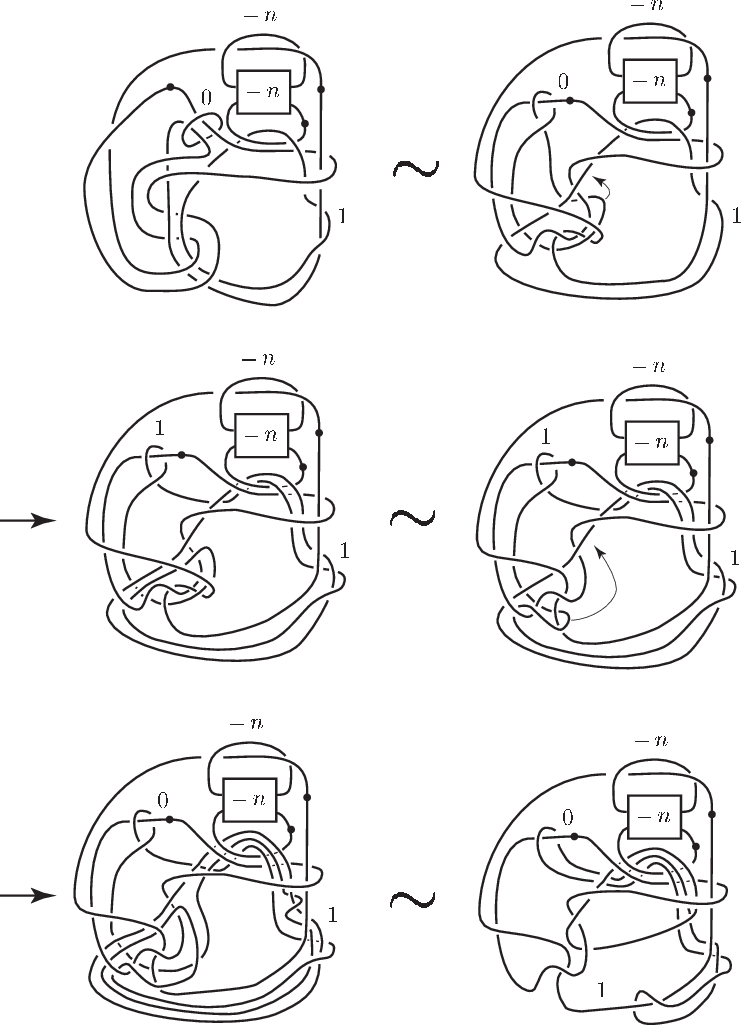}
\caption{Handle calculus without adding canceling 2/3-handle pairs.}
\label{ribbon1}
\end{center}
\end{figure}
\ \ 
\begin{figure}
\begin{center}
\includegraphics[width=1.0\textwidth]{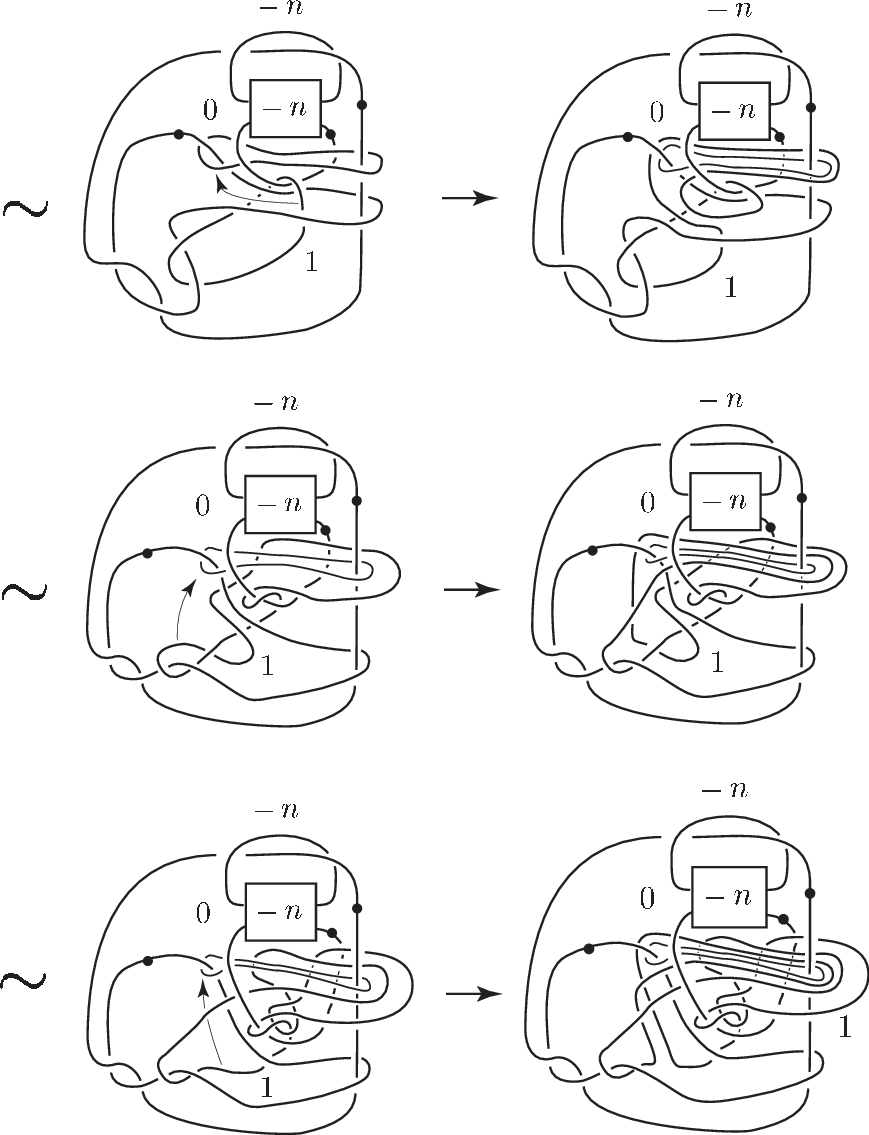}
\caption{Handle calculus without adding canceling 2/3-handle pairs.}
\label{ribbon2}
\end{center}
\end{figure}
\ \
\begin{figure}
\begin{center}
\includegraphics[width=1.0\textwidth]{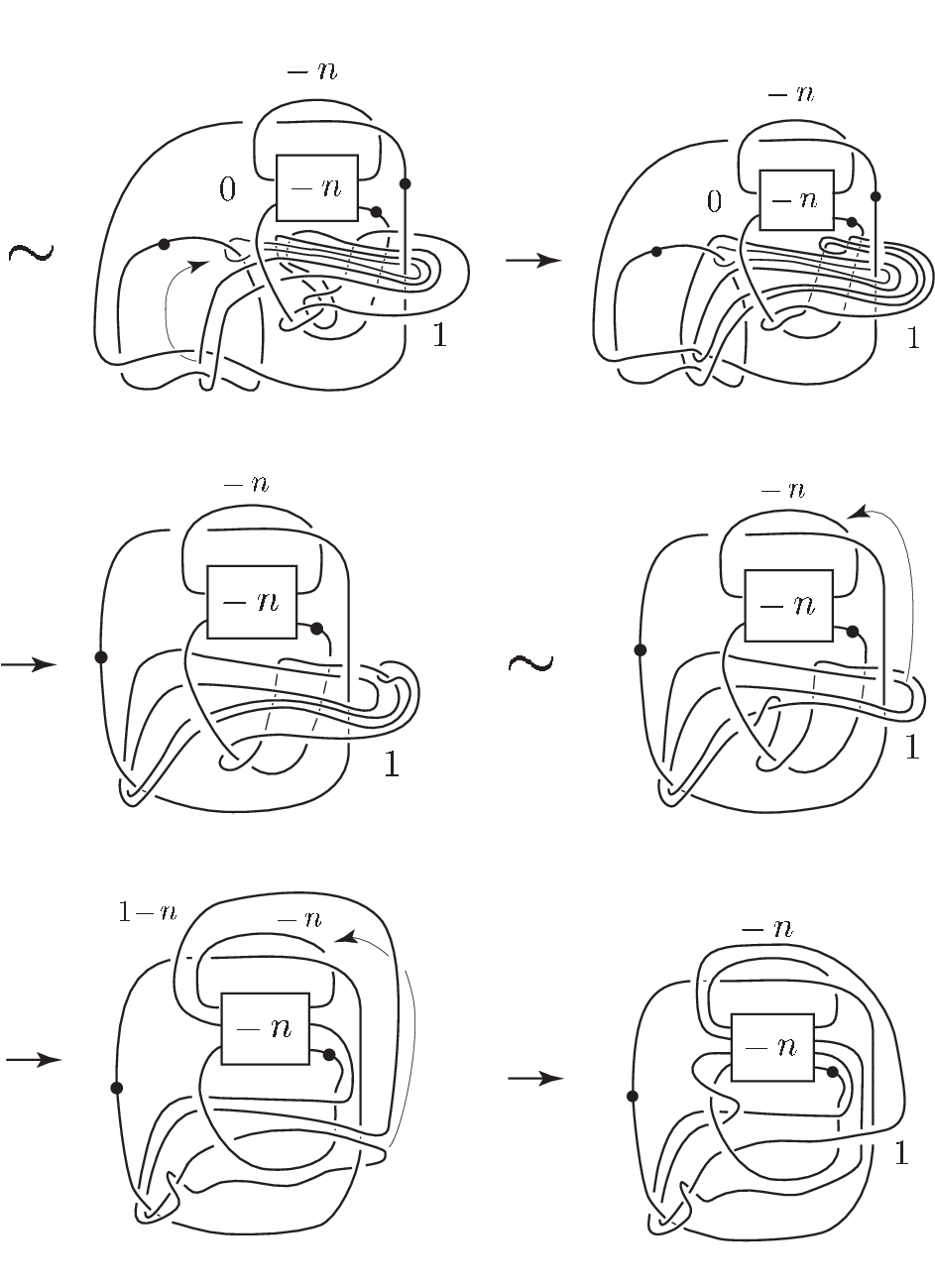}
\caption{Handle calculus without adding canceling 2/3-handle pairs.}
\label{ribbon4}
\end{center}
\end{figure}
\ \ 
\begin{figure}
\begin{center}
\includegraphics[width=1.0\textwidth]{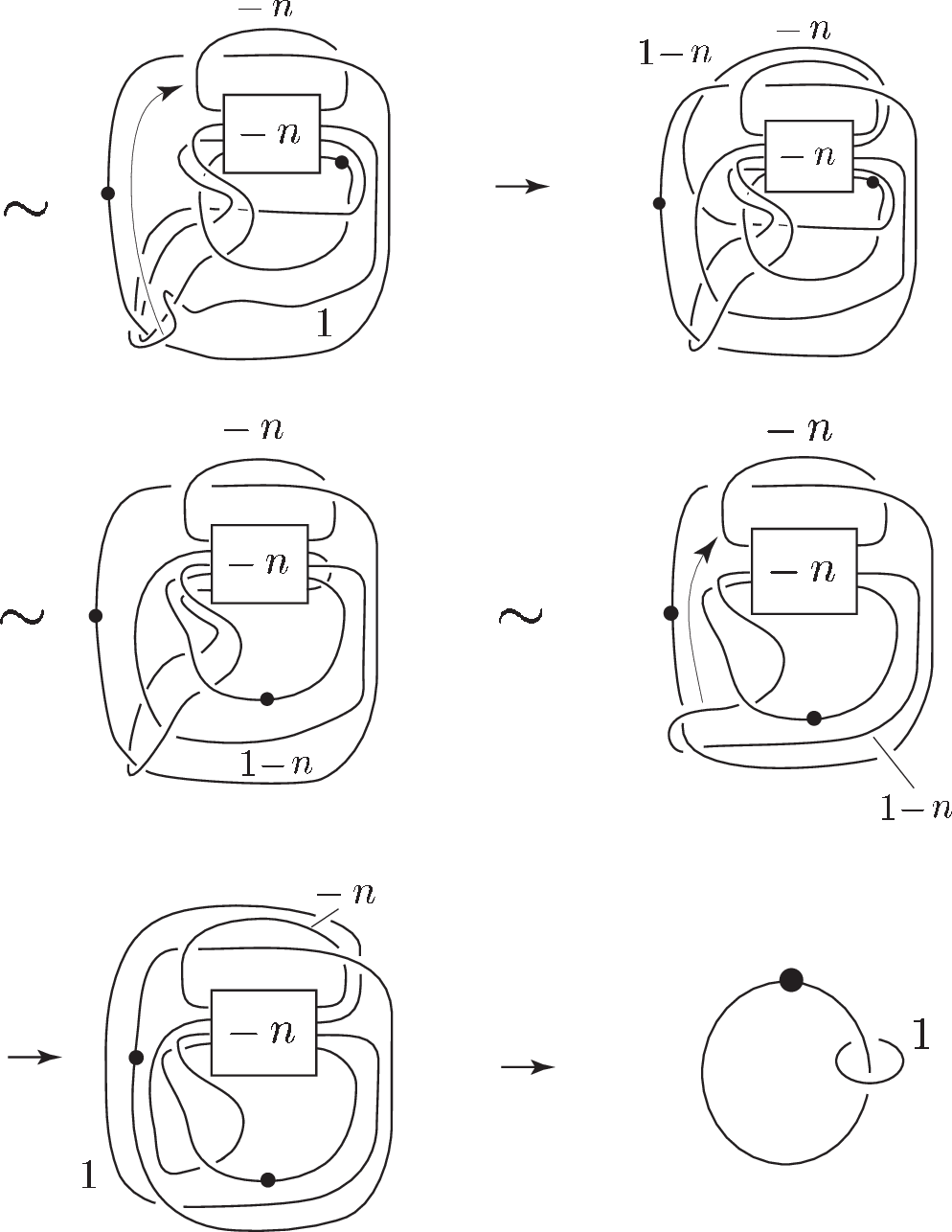}
\caption{Handle calculus without adding canceling 2/3-handle pairs.}
\label{ribbon3}
\end{center}
\end{figure}
\end{proof}

\vskip 30mm
\vskip 30mm
\vskip 30mm
\vskip 30mm
\vskip 30mm

\pagebreak

Now we draw a ribbon presentation of $\mathcal{K}_n$.
Keeping track of  $\mathcal{K}_n$ through the handle calculus,
though it is rather troublesome, 
we can obtain a ribbon presentation of $\mathcal{K}_n$ as in Figure~\ref{ribbonhyoji2}.

\begin{figure}[htpb]
\begin{center}
\includegraphics[width=0.7\textwidth]{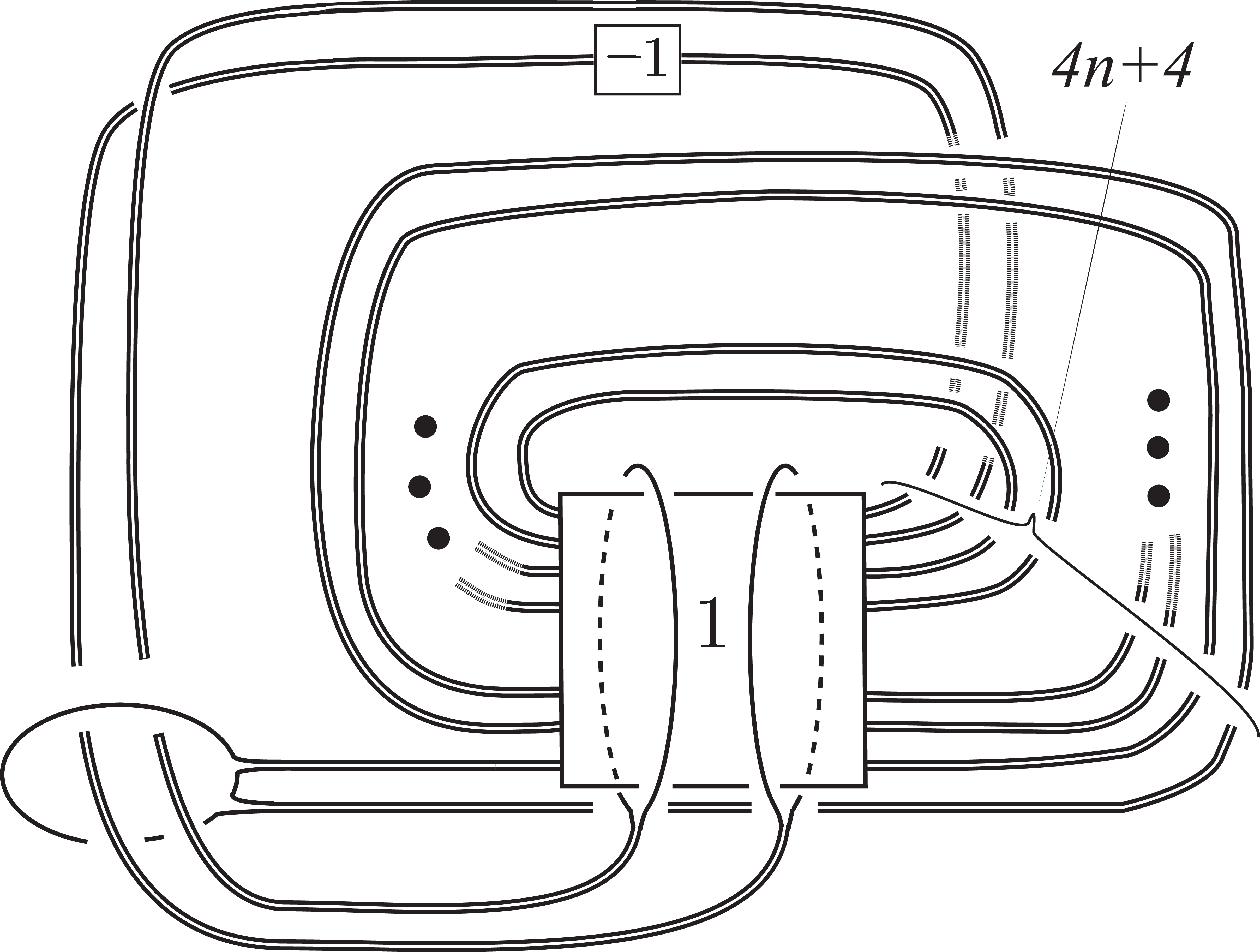}
\caption{A ribbon presentation of $\mathcal{K}_n$ $(n \ge 1)$.}
\label{ribbonhyoji2}
\end{center}
\end{figure}

\section{Two conjectures} \label{sec:Two conjectures}

In this section, we propose two conjectures.
The first one is the following.

\begin{conj} \label{conj:2}
Let $HD$ be a handle diagram  of $B^4$  without 3-handles.
Then the belt-sphere of any $2$-handle of $HD$
is a ribbon knot.
\end{conj}

Recall that  each slice knot in Theorem  \ref{thm:main1}  is isotopic to  the belt-sphere
of  a $2$-handle of  a certain handle diagram of $B^4$  without 3-handles,
see the proofs of Lemma  \ref{h_slice}  and Theorem \ref{thm:main1}.
Therefore, if  Conjecture \ref{conj:2}  is true,
then  all slice knots  in Theorem  \ref{thm:main1} are ribbon.

A partial answer to Conjecture \ref{conj:2} has  already given in Lemma \ref{lem:ribbon}. 
The difficulty to solve  this conjecture   is explained by yet another  following conjecture.

\begin{conj} \label{conj:1}
There exists a   handle diagram  $HD$ of  $B^4$  without 3-handles
such that we always have to add  canceling 2/3-handle pairs
when we change $HD$  into the empty  handle diagram $B^4$ by a sequence of 
handle slides, adding or canceling handle pairs, and isotopies
\end{conj}

A promising candidate  to Conjecture \ref{conj:1} is the handle diagram  $H_{n,k}$ of $B^4$
given by Gompf \cite{G1} (see the left half of Figure \ref{Hnk}),
where $n \ge 3$ and $k \neq 0$.
\begin{figure}[htpb]
\begin{center}
\includegraphics[width=1.0\textwidth]{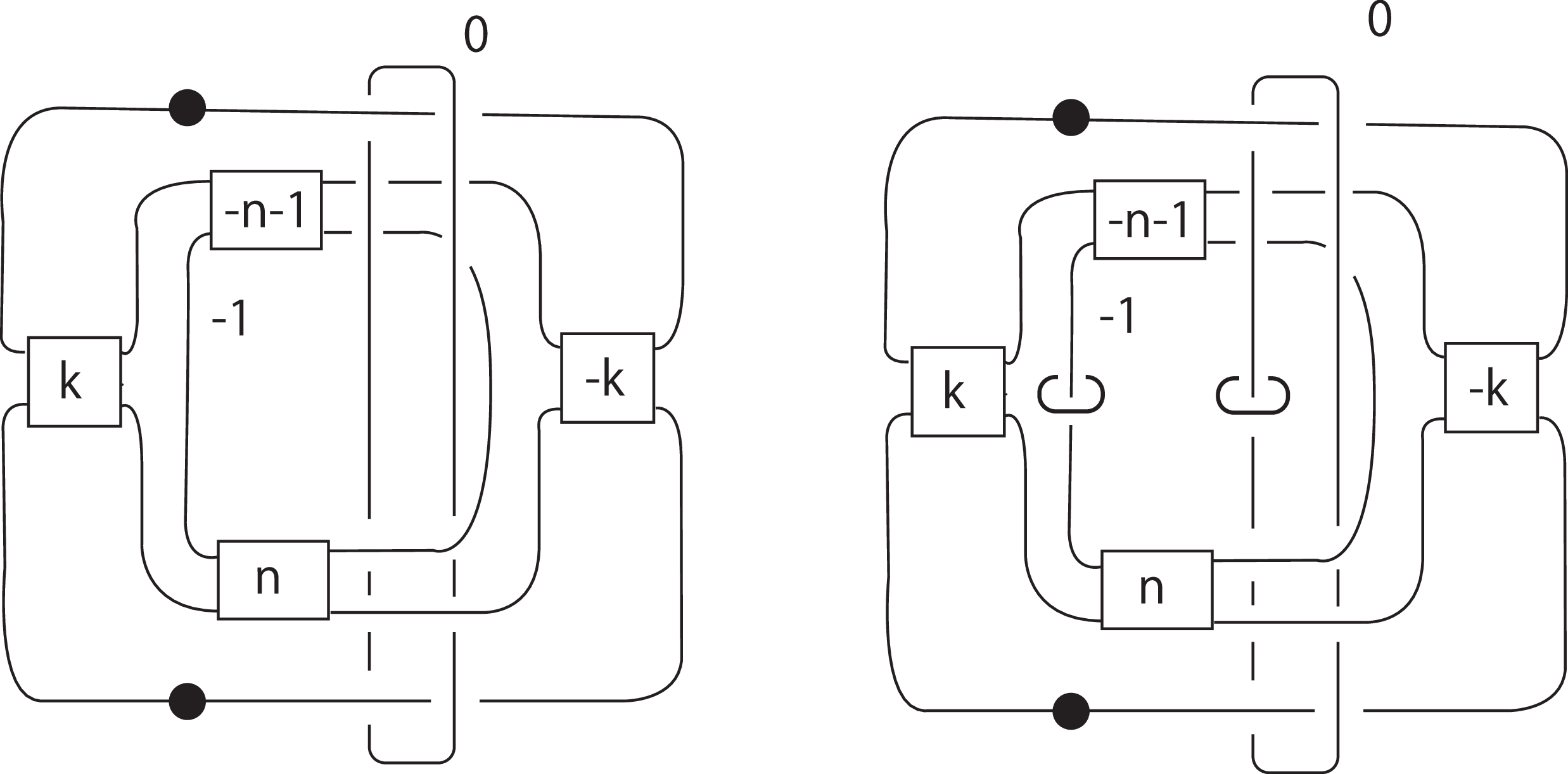}
\caption{The handle diagram $H_{n,k}$ of $B^4$ (left) and the
$2$-component link $L_{n,k}$ in $S^3=\partial B^4$ (right).}
\label{Hnk}
\end{center}
\end{figure}

Finally, we observe that, if Conjecture \ref{conj:2}  is true,
then  Gompf, Scharlemann and Thompson's slice knots in \cite{GST} are ribbon
as follows: 
Let $L_{n,k}$ be the $2$-component link in $S^3$ which 
consists of the two belt-spheres of the two $2$-handles of $H_{n,k}$,
see the right half of Figure \ref{Hnk}.
By the definition,
 $L_{n,k}$ is a slice link, that is, it bounds two smoothly embedded disjoint disks in $B^4$.
Each Gompf, Scharlemann and Thompson's slice knot  is 
obtained from $L_{n, k}$ by attaching an arbitrary band.
After a single $2$-handle slide (along the band),
it turns out that 
the slice knot 
is  isotopic to the belt-sphere
of  a $2$-handle of  a certain handle diagram  of $B^4$  without 3-handles.
Therefore, if  Conjecture \ref{conj:2}  is true,
 these  slice knots are also ribbon.
In this sense,  to solve Conjecture \ref{conj:2} is the first step toward an
affirmative answer  to the slice-ribbon conjecture.

\end{document}